\documentclass[10pt]{amsart}
\usepackage{url}
\usepackage{hyperref}
\usepackage{enumerate}
\usepackage{comment}

\makeatletter

\@namedef{subjclassname@2010}{

  \textup{2010} Mathematics Subject Classification}

\usepackage{amssymb, amsmath,amsthm}
\usepackage{mathrsfs}
\newtheorem{thm}{Theorem}[section]

\newtheorem{prop}[thm]{Proposition}

\newtheorem{lem}[thm]{Lemma}
\theoremstyle{definition}

\newcommand{\ra}{\rightarrow}
\newcommand{\bk}{\backslash}
\newcommand{\mc}{\mathcal}

\newcommand{\mb}{\mathbb}
\newcommand{\sg}{\sigma}

\renewcommand{\ss}{\substack}
\newcommand{\llf}{\left\lfloor}

\newcommand{\e}{\varepsilon}
\newcommand{\rrf}{\right\rfloor}

\renewcommand{\bar}{\overline}

\frenchspacing

\textwidth=15.5cm

\textheight=23cm

\parindent=16pt

\oddsidemargin=0cm

\evensidemargin=0cm

\topmargin=-0.5cm

\begin{document}
\title{Sharp bounds for maximal sums of odd order Dirichlet characters}
\author{Alexander P. Mangerel}
\address{Department of Mathematical Sciences, Durham University, Stockton Road, Durham, DH1 3LE, UK}
\email{smangerel@gmail.com}

\begin{abstract}
Let $g \geq 3$ be fixed and odd, and for large $q$ let $\chi$ be a primitive Dirichlet character modulo $q$ of order $g$. Conditionally on GRH we improve the existing upper bounds in the P\'{o}lya-Vinogradov inequality for $\chi$, showing that
$$
M(\chi) := \max_{t \geq 1} \left|\sum_{n \leq t} \chi(n) \right| \ll \sqrt{q} \frac{(\log\log q)^{1-\delta_g} (\log\log\log\log\log q)^{\delta_g}}{(\log\log\log q)^{1/4}},
$$ 
where $\delta_g := 1-\tfrac{g}{\pi}\sin(\pi/g)$. Furthermore, we show unconditionally that there is an infinite sequence of order $g$ primitive characters $\chi_j$ modulo $q_j$ for which
$$
M(\chi_j) \gg \sqrt{q_j} \frac{(\log\log q_j)^{1-\delta_g} (\log\log\log\log\log q_j)^{\delta_g}}{(\log\log\log q_j)^{1/4}},
$$
so that our GRH bound is sharp up to the implicit constant. This improves on previous work of Granville and Soundararajan, of Goldmakher, and of Lamzouri and the author. 
\end{abstract}
\maketitle
\section{Introduction}
Let $q \geq 3$. Given a non-principal Dirichlet character $\chi$ modulo $q$ we define
$$
M(\chi) := \max_{t \geq 1} \left|\sum_{n \leq t} \chi(n)\right|.
$$
It is a classical problem to obtain sharp bounds for $M(\chi)$. The P\'{o}lya-Vinogradov inequality, proved over 100 years ago, yields the unconditional bound $M(\chi) \ll \sqrt{q}\log q$. Montgomery and Vaughan \cite{MV} showed in 1977 that this can be improved, assuming the Generalised Riemann Hypothesis (GRH), to 
$$
M(\chi) \ll \sqrt{q}\log\log q.
$$ 
The latter bound turns out to be the best possible for general non-principal Dirichlet characters, as Paley \cite{Pal} constructed an infinite sequence of quadratic characters $\chi_j$ modulo $q_j$ such that 
$$
M(\chi_j) \gg \sqrt{q_j} \log\log q_j.
$$ 
Besides the implicit constant therein, no substantial progress has been made towards improving the unconditional bound of P\'{o}lya and Vinogradov for \emph{general} non-principal characters. \\
Almost 20 years ago, Granville and Soundararajan \cite{GSPret} made a breakthrough when they showed that if $\chi$ is a primitive character of fixed \emph{odd} order then the unconditional and GRH bounds in the P\'{o}lya-Vinogradov inequality may both be improved. More precisely, let $g \geq 3$ be a fixed odd integer, and let $\chi$ be a primitive Dirichlet character modulo $q$ of order $g$. In the sequel, write 
$$
Q := \begin{cases} \log q &\text{ assuming GRH,} \\ q &\text{ unconditionally.}\end{cases}
$$
In \cite{GSPret} it was shown, using methods from pretentious number theory (that they themselves developed), that as $q \ra \infty$,
$$
M(\chi) \ll \sqrt{q} (\log Q)^{1-\delta_g/2+o(1)},
$$
where $\delta_g := 1-\tfrac{g}{\pi}\sin(\pi/g)>0$. This was subsequently improved by Goldmakher \cite{Gold}, who refined their method to replace the exponent $1-\delta_g/2$ with $1-\delta_g$. He also constructed, on GRH, an infinite sequence $\chi_j$ of primitive characters modulo $q_j$ of order $g$ such that for every $\e > 0$,
$$
M(\chi_j) \gg_{\e} \sqrt{q_j} (\log\log q_j)^{1-\delta_g - \e} \text{ as } j \ra \infty.
$$
Subsequently, Lamzouri and the author \cite{LamMan} developed new methods to improve upon Goldmakher's result, both conditionally and unconditionally. They showed in particular that on GRH there is a constant $C > 0$, depending at most on $g$, such that if $\chi$ is a primitive character modulo $q$ of order $g$ then
\begin{equation}\label{eq:LamManUpp}
M(\chi) \ll \sqrt{q}(\log\log q)^{1-\delta_g}(\log\log\log q)^{-1/4} (\log\log\log\log q)^C.
\end{equation}
Conversely, they showed unconditionally that there is an infinite sequence of primitive characters $\chi_j$ modulo $q_j$ of order $g$ such that
\begin{equation}\label{eq:LamManLow}
M(\chi_j) \gg \sqrt{q_j}(\log\log q_j)^{1-\delta_g}(\log\log\log q_j)^{-1/4} (\log\log\log\log q_j)^{-C},
\end{equation}
with the same constant $C$ as above. \\
Evidently, despite the improvements to previous work obtained in \cite{LamMan}, the sharp rate of growth of $M(\chi)$, for $\chi$ modulo $q$ of fixed odd order, as a function of $q$ remained unknown. This is in contrast to the situation for \emph{even} order characters, which appears to be rather better understood. Indeed, assuming GRH Granville and Soundararajan \cite{GSPret} generalised Paley's examples to produce, for every \emph{even} order $k$ an infinite family of primitive characters $\chi_j$ modulo $q_j$ of order $k$ such that 
$$
M(\chi_j) \gg \sqrt{q_j}\log\log q_j,
$$
a result that was then rendered unconditional by Goldmakher and Lamzouri \cite{LamGold} (see  \cite{Bob} for an alternative approach, as well as \cite{Lamk} for a more precise result). \\
In this note, we sharpen \eqref{eq:LamManUpp} conditionally on GRH, and obtain an unconditional construction of characters $\chi_j$ of order $g$ that improve upon \eqref{eq:LamManLow}.
We thereby obtain, for the first time, the sharp rate of growth of $M(\chi)$ as a function of $q$, for Dirichlet characters $\chi$ of fixed odd order $g$, conditionally on\footnote{Unfortunately, as our method does not circumvent the issues arising from Siegel zeros, we are unable to improve upon the unconditional Theorem 1.1 of \cite{LamMan}.} GRH.
\begin{thm}\label{thm:oddOrdMag}
Assume GRH. Let $g\geq 3$ be a fixed odd integer. Then for any primitive character $\chi$ modulo $q$ of order $g$ we have
$$
M(\chi) \ll \sqrt{q}\frac{(\log\log q)^{1-\delta_g} (\log\log\log\log\log q)^{\delta_g}}{(\log\log\log q)^{1/4}}.
$$
\end{thm}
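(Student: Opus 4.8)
The plan is to follow the pretentious circle of ideas initiated in \cite{GSPret} and refined in \cite{Gold,LamMan}, but to run the endgame optimization more carefully. The starting point is the Granville--Soundararajan identity relating $M(\chi)$ to a short character sum twisted by $\chi(-1)$: for odd $g$ one has $\chi(-1)=1$, and one obtains, up to acceptable error terms,
$$
M(\chi) \ll \frac{\sqrt q}{\pi}\left| \sum_{n\le y} \frac{\chi(n)\Lambda(n)}{n \log n}\, \right| + (\text{lower order}),
$$
or rather the version phrased through $F(\chi):=\bigl|\sum_{n}\chi(n)/n\cdot(\text{smoothing})\bigr|$, so that bounding $M(\chi)$ reduces to bounding a logarithmically weighted mean value of $\chi$ over primes $p\le y$ with $y$ a suitable power of $\log q$ (on GRH, $y=(\log q)^{A}$ suffices to capture the main term). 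I would first recall this reduction verbatim from \cite{LamMan}, so that the quantity to be estimated becomes $\sum_{p\le y}\frac{1-\mathrm{Re}\,\chi(p)p^{-it}}{p}$ appearing in an exponential, uniformly over $t$.

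The heart of the matter is then an extremal problem: among completely multiplicative functions $f$ taking values in the group $\mu_g$ of $g$-th roots of unity (the possible values of $\chi(p)$), how large can $\bigl|\sum_{p\le y} f(p)/p\bigr|$ be? The key structural input, going back to \cite{GSPret}, is that the relevant Dirichlet series is governed by $\max_{z\in\mu_g}\mathrm{Re}(z)=\cos(\pi/g)$ rather than by $1$, which is exactly the source of the saving $\delta_g = 1-\frac{g}{\pi}\sin(\pi/g)$ after one integrates $\cos(\pi/g)$-type contributions against the measure $d\theta/\theta$ coming from the weighting. Concretely, I would partition the primes $p\le y$ according to the size of $1-\mathrm{Re}(\chi(p)p^{-it})$ into dyadic-in-$\log$ ranges, apply a mean-value / large-sieve bound (on GRH, a Halász-type estimate for $L(1+1/\log y + it,\chi)$, or more precisely the hybrid bounds in \cite{LamMan}) on each range, and optimize the resulting sum. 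The crucial improvement over \eqref{eq:LamManUpp} is that the $(\log\log\log\log q)^{C}$ factor, which in \cite{LamMan} came from a crude bound on the number of ``bad'' $t$ or on a certain resonance sum, can be replaced by the genuinely optimal $(\log\log\log\log\log q)^{\delta_g}$: one more logarithm appears because the optimal test point $t$ is itself constrained by a Diophantine/counting condition whose cost is a further iterated logarithm, and the exponent $\delta_g$ (rather than an unspecified $C$) emerges because that cost enters through the same $\cos(\pi/g)$ mechanism as the main term. I would set up the optimization as a two-variable calculus problem in the ``height'' $y$ and the resonator length, carefully tracking which iterated logarithm each factor sits at.

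I expect the main obstacle to be precisely this last optimization: showing that the loss incurred at the level of the fifth iterated logarithm is \emph{exactly} $(\log_5 q)^{\delta_g}$ and not merely $(\log_5 q)^{O(1)}$ requires proving a matching upper bound for the extremal multiplicative problem at the relevant scale, i.e.\ one must show that the worst-case $f:\,p\mapsto\mu_g$ genuinely behaves, on the critical range of primes, like the function concentrating on the arc of $\mu_g$ nearest a fixed direction, with no room for a slightly-better-than-$\cos(\pi/g)$ correlation once the Diophantine constraint on $t$ is imposed. This is where GRH is used in an essential way (to take $y$ as small as a fixed power of $\log q$, and to get the clean Halász input), and it is also the step that the footnote flags as failing unconditionally because a putative Siegel zero would corrupt the control of $L(s,\chi)$ near $s=1$. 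The remaining steps --- the P\'olya-type Fourier expansion, truncation of the Dirichlet polynomial, and passage from the prime sum back to $M(\chi)$ --- are routine given \cite{GSPret,LamMan} and I would import them with only cosmetic changes; the genuinely new content lives in the refined extremal analysis and its matching lower-order bookkeeping.
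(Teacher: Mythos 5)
Your sketch reproduces the general pretentious framework (reduction to logarithmically weighted prime sums, GRH used to work at height a power of $\log q$), but it omits the structural mechanism that actually produces the stated exponents, and the steps you substitute for it do not amount to an argument. The reduction is not to $\sum_{p\le y}(1-\mathrm{Re}\,\chi(p)p^{-it})/p$ alone: the relevant quantity is the distance from $\chi$ to $\psi(n)n^{it}$, where $\psi$ is an \emph{odd primitive character of small conductor} $m\le(\log\log q)^{4/7}$ and order $k$, and the whole theorem comes from controlling this twisted sum and then optimizing in $m$ and $k$. Concretely, the factor $(\log\log\log q)^{-1/4}$ arises from the trade-off between the prefactor $\sqrt{m}/\phi(m)$ (a saving of about $\tfrac12\log m$) and a loss of size $\asymp(\log\log\log q)/k^2$ coming from the Taylor expansion of $\frac{\pi/(gk^\ast)}{\tan(\pi/(gk^\ast))}$, optimal at $m\asymp\sqrt{\log\log\log q}$; the factor $(\log\log\log\log\log q)^{\delta_g}$ arises because the secondary term in the twisted prime sum is governed by the values of $\psi$ at primes $p\le(\log m)/100$ and contributes at most $\delta_g\log\log\log m+O(1)$. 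None of this machinery (the character $\psi$, its order, the optimization in its conductor) appears in your proposal, and incidentally $\max_{z\in\mu_g}\mathrm{Re}(z)=1$, not $\cos(\pi/g)$: the constant $\delta_g$ comes from averaging $\max_{z\in\mu_g\cup\{0\}}\mathrm{Re}(z\,e(-\ell/k))$ over residues $\ell\bmod k$, not from the mechanism you describe.

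The genuine gap is at exactly the point you defer to ``careful bookkeeping.'' Your diagnosis that the $(\log\log\log\log q)^{C}$ loss in \cite{LamMan} came from counting bad $t$ or from a resonance sum, to be removed by a Diophantine constraint on the optimal $t$ and a two-variable optimization in $y$ and a resonator length, is not correct, and no resonator enters the upper bound at all. The loss came from estimating the twisted prime sums only up to errors of size $O(\log\log m)$, and removing it is the new content of the paper: Proposition \ref{prop:Spsig} evaluates $\mathcal{S}(y;\psi,g)=\sum_{p\le y}p^{-1}\max_{z\in\mu_g\cup\{0\}}\mathrm{Re}(z\bar{\psi}(p))$ up to $O(1)$, using the Languasco--Zaccagnini asymptotic for $\sum_{p\le y,\ p\equiv a\ (\mathrm{mod}\ m)}1/p$ including its constant term $C_m(a)$, Fourier expansion over the powers $\psi^j$, and an $L^1$ bound for the coefficients $S_j$; the trivial bound $\cos(\tfrac{2\pi}{g}\|g^\ast\ell/k^\ast\|)\le1$ then caps the secondary term by $\log\log\log m-\log(m/\phi(m))+O(1)$, which after the optimization $m\asymp\sqrt{\log\log\log q}$ is precisely what yields the exponents $1/4$ and $\delta_g$ at the third and fifth iterated logarithms. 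Your dyadic-partition-plus-Hal\'asz scheme supplies no mechanism producing these exponents, and the obstacle you flag (a matching extremal lower bound) is not needed for the upper bound --- that is the content of Theorem \ref{thm:sharp}, proved separately. As written, the proposal would at best recover a bound of the shape \eqref{eq:LamManUpp}, not the theorem.
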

\begin{thm}\label{thm:sharp}
Let $g \geq 3$ be a fixed odd integer. There are arbitrarily large $q$ and primitive Dirichlet characters $\chi$ modulo $q$ of order $g$ such that
$$
M(\chi) \gg \sqrt{q}\frac{(\log\log q)^{1-\delta_g} (\log\log\log\log\log q)^{\delta_g}}{(\log\log\log q)^{1/4}}.
$$
\end{thm}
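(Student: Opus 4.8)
The plan is to combine the \emph{pretentious} lower bound for odd-order character sums with an explicit construction. Since $g$ is odd the character $\chi$ is even, and P\'olya's Fourier expansion gives
\[
M(\chi)=\frac{\sqrt q}{\pi}\max_{1\le t\le q}\Bigl|\sum_{n\le q}\frac{\overline\chi(n)}{n}\sin\!\bigl(\tfrac{2\pi nt}{q}\bigr)\Bigr|+O(\log q).
\]
Specialising $t/q$ to a rational $a/f$ with $f$ small and expanding the periodic weight into Dirichlet characters modulo $f$ isolates, for any primitive \emph{odd} character $\psi$ of conductor $f$, a main term of size $\gg \tfrac{\sqrt q}{\sqrt f}\,|L(1,\chi\overline\psi)|$, the non-principal twists modulo $f$ and the terms with $\gcd(n,f)>1$ contributing lower order (cf.\ \cite{GSPret,Gold,LamMan}). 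It therefore suffices to produce, for arbitrarily large $q$, a primitive character $\chi$ modulo $q$ of order $g$ together with an odd character $\psi$ of small conductor $f$ for which $\chi\overline\psi$ is as principal-pretentious as possible, i.e.\ for which $\mathbb D(\chi,\psi;Y)^2:=\sum_{p\le Y}\tfrac{1-\operatorname{Re}(\chi(p)\overline\psi(p))}{p}$ is small, where $Y\asymp(\log q)^{2}$ is the range made effective by GRH.

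I would choose $\psi$ of order $r$, with $r$ a parameter to be optimised; taking $r$ with $r+1$ prime lets $\psi$ have conductor $f=r+1\asymp r$, and imposing $2\mid r$ and $\gcd(r,g)=1$ forces $\psi$ to be odd. Under GRH the powers $\psi^{j}$, $1\le j<r$, are non-principal of small conductor, so $\psi(p)$ equidistributes on $\mu_r$ as $p$ runs over primes up to $Y$; since $\gcd(r,g)=1$, the per-prime optimal choice ``$\chi(p):=$ the $g$-th root of unity nearest to $\psi(p)$'' makes $\operatorname{Re}(\chi(p)\overline\psi(p))$ equal to the cosine of the distance from $\arg\psi(p)$ to the nearest multiple of $2\pi/g$, whose average over the equidistributed $\psi(p)$ is $1-\delta_g-\Theta_g(r^{-2})$ --- here $\tfrac g\pi\sin(\tfrac\pi g)=1-\delta_g$ is the mean of $\cos$ on $(-\tfrac\pi g,\tfrac\pi g)$, and the $\Theta_g(r^{-2})$ is the Euler--Maclaurin defect incurred by sampling the cornered function $u\mapsto\cos(\tfrac{2\pi}{g}\|u\|)$ on the finite grid $\tfrac1r\mathbb Z$. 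Obtaining the \emph{sharp} second-order behaviour --- the source of the extra $(\log\log\log\log\log q)^{\delta_g}$ over \eqref{eq:LamManLow} --- requires a more delicate two-scale version of this step, engineering $\psi$ (and the fine structure of the construction below) so that on an initial range of primes the finite-grid defect is essentially removed; balancing this gain against what it forces on the conductor is the technical heart of the improvement.

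Since ``$g$-th root of unity nearest to $\psi(\cdot)$'' is not multiplicative, $\chi$ cannot be a single small-conductor character. Instead I would take $q=P_1\cdots P_K$ a product of distinct primes $P_i\equiv 1\pmod{rg}$, and write an order-$g$ character $\chi=\prod_i\chi_i$ modulo $q$ via exponents $a_i\in\mathbb Z/g$ with $\chi_i(\omega_i)=e^{2\pi i a_i/g}$ for a fixed primitive root $\omega_i$ modulo $P_i$. The requirement that $\chi(p)$ equal the prescribed $g$-th root of unity for every prime $p\le Y_1$ then becomes a linear system over $\mathbb Z/g$ in the $a_i$ with coefficient matrix $(\operatorname{ind}_{\omega_i}(p))_{p\le Y_1,\,i\le K}$; a large-sieve / effective-Chebotarev estimate for the joint distribution of discrete logarithms shows that for a positive proportion of admissible tuples $(P_1,\dots,P_K)$ this matrix has full row rank, so the system is solvable, with $\chi$ additionally arranged to be primitive of order exactly $g$. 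Balancing degrees of freedom, $\log q\asymp\sum_i\log P_i\gg \pi(Y_1)\,Y_1$ (the lower bounds on $\log P_i$ and on $K$ coming from the large sieve) still permits $Y_1$ essentially as large as a fixed power of $\log q$, so $\log\log Y_1=\log\log\log q+O(1)$ while the complementary range is harmless: $\sum_{Y_1<p\le Y}\tfrac1p=O(1)$. For the $\chi$ so obtained, $\log|L(1,\chi\overline\psi)|=\sum_{p\le Y}\tfrac{\operatorname{Re}(\chi(p)\overline\psi(p))}{p}+O(1)=(1-\delta_g)\log\log\log q-\Theta_g\!\bigl(r^{-2}\log\log\log q\bigr)+O(1)$, whence
\[
M(\chi)\gg\frac{\sqrt q}{\sqrt r}\,(\log\log q)^{1-\delta_g}\exp\!\Bigl(-\Theta_g\bigl(r^{-2}\log\log\log q\bigr)\Bigr);
\]
choosing $r\asymp\sqrt{\log\log\log q}$ (available among $r$ with $r+1$ prime) balances the two $r$-dependent factors and already gives $M(\chi)\gg\sqrt q\,(\log\log q)^{1-\delta_g}(\log\log\log q)^{-1/4}$, while the refined two-scale version of the defect analysis upgrades this to the bound of Theorem~\ref{thm:sharp}; and since the large sieve yields such $q$ of arbitrarily large size, the sequence is infinite.

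The hard part is the construction together with this last refinement: one must realise the near-extremal pretentious profile by a \emph{genuine} primitive character of order exactly $g$ on as long an initial range of primes as possible --- a problem that couples the strength of the large-sieve input on discrete logarithms, the size of $q$, and the order $r$ of $\psi$ --- and then account for every lower-order loss (the finite order $r$, the conductor of $\psi$, the tail primes beyond $Y_1$, the primitivity adjustments) precisely enough that exactly the factors $(\log\log q)^{1-\delta_g}$, $(\log\log\log q)^{-1/4}$ and $(\log\log\log\log\log q)^{\delta_g}$ survive; squeezing out the last of these, so as to match the conditional upper bound of Theorem~\ref{thm:oddOrdMag}, is the genuinely new point.
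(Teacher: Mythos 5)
There is a genuine gap: your sketch reproduces (roughly) the argument that already gives \eqref{eq:LamManLow}, but the new content of Theorem \ref{thm:sharp} --- the factor $(\log\log\log\log\log q)^{\delta_g}$ --- is exactly the part you defer with phrases like ``a more delicate two-scale version of this step'' and ``the refined defect analysis upgrades this,'' without supplying any mechanism. Concretely, what is needed is a second-order evaluation of $\mc{S}(y;\psi,g)$ (Proposition \ref{prop:Spsig} in the paper), which identifies the secondary term as $\sum_{\ell}\cos(\tfrac{2\pi}{g}\|g^\ast\ell/k^\ast\|)\sum_{p\le(\log m)/100,\ \tilde{\psi}(p)=e(\ell/k^\ast)}p^{-1}$; this term is maximal, of size $\log\log\log m+O(1)=\log\log\log\log\log q+O(1)$ when $m\asymp\sqrt{\log\log\log q}$, precisely when $\psi(p)=1+o(1)$ at \emph{all} primes $p\ll\log m$. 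One must then actually construct an \emph{odd} primitive $\psi\pmod m$ that simultaneously has order $\gg m$ and satisfies $|\psi(p)-1|\ll(\log m)^{-1/2}$ for $p\le(\log m)/100$. In the paper this is done via Bujold's theorem (Theorem \ref{thm:Buj}), which produces many such odd characters for most prime moduli, combined with a sieve (Lemma \ref{lem:siftShift}) producing primes $m$ with $2\|(m-1)$ and $P^-((m-1)/2)>m^{\delta}$ so that all but $O(m^{1-\delta})$ characters mod $m$ have order $\ge(m-1)/2$ (Lemma \ref{lem:largeOrd}). Your proposal contains no substitute for this existence step, and without it the construction only recovers the $(\log\log\log q)^{-1/4}$ saving, not the five-fold-log gain; note also that the requirement is on an initial range of primes up to $\log m\asymp\log\log\log\log q$, a scale your sketch never isolates.

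A second problem is your appeal to GRH (``$Y\asymp(\log q)^2$ is the range made effective by GRH''): Theorem \ref{thm:sharp} is unconditional, so a GRH-dependent reduction of $M(\chi)$ to $|L(1,\chi\bar\psi)|$ does not prove the stated result. The paper avoids this by using the unconditional Theorem \ref{thm:LMLow} (valid for all but $O(N^{1/4})$ characters $\chi$ of conductor $\le N$, which is enough since the construction produces $\gg\sqrt N$ candidates), and by replacing GRH for the small modulus $m$ with an $\eta$-goodness condition secured unconditionally through zero-density estimates inside Lemma \ref{lem:siftShift}. Finally, your from-scratch construction of $\chi$ (solving a linear system in discrete logarithms over $\mb{Z}/g$ for $q=P_1\cdots P_K$, with solvability claimed via a large-sieve/Chebotarev input) is plausible in spirit but is itself only asserted; the paper instead invokes the already-proved construction of \cite{LamMan} (its Lemma 4.7 and Theorem 2.5), so this step, too, would need to be carried out in detail in your approach.
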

\subsection{Proof Ideas}
In order to surpass the results in \cite{LamMan} we refine the methods of the latter paper to gain further insight into what is required of the extremal characters in the problem. With this insight we are able to quickly obtain Theorem \ref{thm:oddOrdMag}, and also improve upon the construction of the characters $\chi$ that yield \eqref{eq:LamManLow}, which leads to Theorem \ref{thm:sharp}. To describe more precisely our key new ideas, we shall recall some of the details of the work on odd order characters in \cite{GSPret}, and the work that succeeded it. \\
Let $\chi$ be a primitive character of large conductor $q$ and fixed odd order $g$. Granville and Soundararajan, and later Goldmakher, showed that the estimation of $M(\chi)$ may be (essentially) reduced to obtaining lower bounds for the prime sum
$$
\sum_{p \leq y} \frac{1-\text{Re}(\chi(p)\bar{\psi}(p)p^{-it})}{p},
$$
for each $(\log q)^A \leq y \leq q$, each $t \in \mb{R}$ satisfying $|t| \leq \log y$, and every odd\footnote{Recall that a Dirichlet character $\xi$ is called \emph{odd} if $\xi(-1) = -1$, and \emph{even} otherwise. We will use without mention the facts that if $\xi$ is odd then its order is even, and thus if $\xi$ has odd order then $\xi$ must be even.} primitive character $\psi$ of conductor $m \leq \log y$. For the purposes of this explanation we shall assume that $t = 0$. Letting $\mu_g$ denote the set of complex roots of unity of order $g$, a lower bound for this quantity may naturally be obtained by instead seeking an upper bound for
\begin{equation}\label{eq:maxCorrel}
\mc{S}(y;\psi,g) := \sum_{p \leq y} \frac{1}{p}\max_{\ss{z \in \mu_g \cup \{0\}}} \text{Re}(z \bar{\psi}(p)).
\end{equation}
Given $\psi$ modulo $m$ of order $k$, say, Granville and Soundararajan observed that since the conductor of $\psi$ is at most $\log y$ we may appeal to the Siegel-Walfisz theorem on primes in arithmetic progressions to show that\footnote{Given $t \in \mb{R}$ we write $e(t) := e^{2\pi it}$.}
$$
\mc{S}(y; \psi,g) = \left(\frac{1}{k} \sum_{\ell \pmod{k}} \max_{\ss{z \in \mu_g \cup \{0\}}} \text{Re}(z e(-\ell/k)) + o_{y \ra \infty}(1)\right) \log\log y.
$$
They then computed the bracketed expression and showed that it is always $< \delta_g$ (and in fact is asymptotically $\delta_g + o(1)$, if $k = k(y) \ra \infty$ as $y \ra \infty$). \\
One can go further and resolve the lower order terms in the above estimation of $\mc{S}(y;\psi,g)$, in particular by taking account of the size of $k$, and choosing $\psi$ optimally in order to produce the worst case upper bound. This is ultimately what enabled Lamzouri and the author to obtain a savings of size $(\log\log\log q)^{-1/4}$ in \eqref{eq:LamManUpp}, on GRH. \\
The shape of \eqref{eq:LamManUpp} is influenced by the types of constructions available to prove \eqref{eq:LamManLow}. In \cite[Sec. 4]{LamMan} it was shown that, given a choice of $\psi$ one can produce an order $g$ character $\chi$ for which, in some restricted range of small primes $p$, $z = \chi(p) \in \mu_g \cup \{0\}$ maximises $\text{Re}(z \bar{\psi}(p))$. By choosing $m$ to be prime and $\psi$ to be a character of maximal order $k = m-1$ (thus necessarily odd), one could then produce a character $\chi$ for which \eqref{eq:maxCorrel} differs from its worst case upper by an error term of size $O(\log\log m)$. By choosing $m$ of the optimal size  $m \asymp \sqrt{\log\log\log q}$, one arrives at the bound \eqref{eq:LamManUpp}, with the unresolved $(\log\log\log\log q)^C$ term corresponding to the aforementioned $O(\log\log m)$ error. \\
A short-coming of the above argument is that it does not attempt to control $\psi$ itself any further than in controlling the size of its modulus $m$, and forcing its order to be as large as possible. As a consequence of Proposition \ref{prop:Spsig} below, which estimates $\mc{S}(y;\psi,g)$ up to $O(1)$ error terms (under a hypothesis on $\psi$ that is weaker than GRH, and can be guaranteed in the proof of Theorem \ref{thm:sharp}), we surmise that the extremal $\psi$ for \eqref{eq:maxCorrel} has the property that (roughly speaking) $\psi(p) = 1 + o(1)$ uniformly over all primes $p \ll \log m$. Thus, to improve upon \eqref{eq:LamManLow} we must ensure that we can find such an \emph{odd} character $\psi$ of \emph{large} order ($k \gg m$, rather than $k = m-1$, being sufficient) for which we may construct $\chi$ of order $g$ as described above. \\
To carry out this construction, in Section \ref{subsec:BujApp} we use a nice recent result of Bujold \cite{Buj}, showing that for \emph{most} prime moduli $m$ there are \emph{many} characters $\xi$ with $\xi(-1) = -1$, such that $\xi(p) = 1 + o(1)$ uniformly at all $p \ll \log m$. By choosing large primes $m$ judiciously so that $m-1$ has few small prime factors, we show that at least one of the odd characters $\xi \pmod{m}$, which we choose to be $\psi$, has order $k \geq (m-1)/2$. Feeding this character $\psi$ into the methods of \cite[Sec. 4]{LamMan}, we produce an order $g$ character $\chi$ for which the estimate of Theorem \ref{thm:sharp} holds.

\section{Sharpening estimates for distances}
Throughout this section, we fix an odd integer $g \geq 3$. Let $y \geq 3$, and let $\psi$ be an odd primitive Dirichlet character modulo $m \leq (\log y)^{4/7}$ of order $k$. Our interest in this section is to give a precise estimate for the sum
\begin{align*}
\mc{S}(y;\psi,g) := \sum_{p \leq y} \frac{1}{p}\max_{z_p \in \mu_g \cup \{0\}} \text{Re}(z_p \bar{\psi}(p)),
\end{align*}
recalling that $\mu_g$ denotes the set of complex roots of unity of order $g$. To this end we will prove Proposition \ref{prop:Spsig}. To state it, we require some further notation and definitions. \\
In the sequel, we write $k^{\ast} := k/(g,k)$ and $g^\ast := g/(g,k)$. We also set $\tilde{\psi} := \psi^{(k,g)}$. Note that as $g$ is odd and $k$ is even, $k^\ast \geq 2$. Finally, for $\eta \in (0,1/2)$ we say that a modulus $m$ is \emph{$\eta$-good} if the product of Dirichlet $L$-functions
$$
\prod_{\xi \neq \xi_0 \pmod{m}} L(s,\xi)
$$ 
has no zeroes in the rectangle
$$
1-\eta < \text{Re}(s) \leq 1, \quad |\text{Im}(s)| \leq (\log m)^2.
$$
Obviously, under the assumption of GRH every modulus $m$ is $\eta$-good, for any $\eta \in (0,1/2)$.
\begin{prop} \label{prop:Spsig}
Assume there is some $\eta \in (0,1/2)$ such that $m$ is $\eta$-good. Then\footnote{Given $t \in \mb{R}$ we write $\|t\| := \min_{k \in \mb{Z}} |t-k|$.} 
\begin{align*}
\mc{S}(y;\psi,g) &= (1-\delta_g)\frac{\pi/(gk^\ast)}{\tan(\pi/(gk^\ast))} \log\left(\frac{\log y}{\log\log m}\right) +\sum_{\ell \pmod{k^\ast}} \cos(\tfrac{2\pi}{g}\|g^\ast \ell/k^\ast\|)\left(\sum_{\ss{p \leq (\log m)/100 \\ \tilde{\psi}(p) = e(\ell/k^\ast)}} \frac{1}{p}\right)
+ O_{\eta}(1).
\end{align*}
\end{prop}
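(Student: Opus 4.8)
\emph{Strategy.} The plan is to evaluate the maximum defining $\mc S(y;\psi,g)$ explicitly, reorganise the sum according to the value of $\tilde\psi(p)$, and then separate the ``small'' primes $p\le(\log m)/100$ — which produce the stated correction term verbatim — from the ``large'' primes, over which $\tilde\psi$ equidistributes and produces the main term. First, writing $\psi(p)=e(\theta_p)$ for $p\nmid m$, the $g$-th roots of unity are equally spaced with gap $2\pi/g<\pi$, so
$$\max_{z_p\in\mu_g\cup\{0\}}\operatorname{Re}(z_p\bar\psi(p))=\cos\!\Big(\tfrac{2\pi}{g}\,\|g\theta_p\|\Big)\ge\cos(\pi/g)>0 ;$$
thus the value $0$ is never the maximum off $p\mid m$ (and at $p\mid m$ everything is $0$, contributing nothing). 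Since $\psi(p)^k=1$ we may write $\theta_p=a_p/k$, and then $\tilde\psi(p)=\psi(p)^{(g,k)}=e(a_p/k^\ast)$, so $\ell_p\equiv a_p\pmod{k^\ast}$, while $g\theta_p=g^\ast a_p/k^\ast$ gives $\|g\theta_p\|=\|g^\ast\ell_p/k^\ast\|$. Hence, with $N_\ell(y):=\sum_{p\le y,\ \tilde\psi(p)=e(\ell/k^\ast)}1/p$,
$$\mc S(y;\psi,g)=\sum_{\ell\pmod{k^\ast}}\cos\!\Big(\tfrac{2\pi}{g}\|g^\ast\ell/k^\ast\|\Big)N_\ell(y).$$

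\emph{Fourier expansion.} I would then expand $F\colon\Z/k^\ast\to\R$, $F(\ell)=\cos(\tfrac{2\pi}{g}\|g^\ast\ell/k^\ast\|)$, in additive characters, giving
$$\mc S(y;\psi,g)=\widehat F(0)\sum_{\substack{p\le y\\ p\nmid m}}\frac1p+\sum_{1\le j<k^\ast}\widehat F(j)\sum_{p\le y}\frac{\tilde\psi^j(p)}{p},$$
where for $j\ne0$ the character $\tilde\psi^j$ is nonprincipal mod $m$ (as $\tilde\psi$ has order exactly $k^\ast$), so that $p\nmid m$ is automatic there. Two elementary facts are needed. As $\gcd(g^\ast,k^\ast)=1$ we have $\widehat F(0)=\tfrac1{k^\ast}\sum_\ell\cos(\tfrac{2\pi}{g}\|\ell/k^\ast\|)$; summing this cosine progression — using crucially that $k^\ast=k/(g,k)$ is \emph{even}, as $k$ is even and $(g,k)$ odd, so the ``midpoint'' term $\ell=k^\ast/2$ is present — yields the \emph{exact} identity $\widehat F(0)=(1-\delta_g)\tfrac{\pi/(gk^\ast)}{\tan(\pi/(gk^\ast))}$. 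Secondly, after reindexing by the unit $g^\ast$, $\widehat F$ becomes the discrete Fourier transform of a sampling of the $1$-periodic function $x\mapsto\cos(\tfrac{2\pi}{g}\|x\|)$, which is $C^1$ with a single jump in its derivative (at $x=\tfrac12$); hence its Fourier coefficients are $O_g(1/n^2)$ and, by aliasing, $\sum_{1\le j<k^\ast}|\widehat F(j)|\ll_g1$ uniformly in $k^\ast$.

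\emph{Matching terms.} By Mertens' theorem $\sum_{p\le y}1/p=\log\log y+M+o(1)$ and $\sum_{p\le(\log m)/100}1/p=\log\log\log m+M+O(1/\log\log m)$, while $\sum_{p\mid m,\ p>(\log m)/100}1/p\ll\omega(m)/\log m=o(1)$; since $\log\log y-\log\log\log m=\log(\log y/\log\log m)$, these combine to give
$$\widehat F(0)\sum_{\substack{p\le y\\ p\nmid m}}\frac1p=\widehat F(0)\log\!\Big(\frac{\log y}{\log\log m}\Big)+\widehat F(0)\sum_{\substack{p\le(\log m)/100\\ p\nmid m}}\frac1p+O(1).$$
Splitting each $\sum_{p\le y}\tilde\psi^j(p)/p$ at $(\log m)/100$, the ``small'' parts recombine with the last displayed term, via $\sum_{j=0}^{k^\ast-1}\widehat F(j)\tilde\psi^j(p)=\cos(\tfrac{2\pi}{g}\|g^\ast\ell_p/k^\ast\|)$, into precisely $\sum_{\ell}\cos(\tfrac{2\pi}{g}\|g^\ast\ell/k^\ast\|)\sum_{p\le(\log m)/100,\ \tilde\psi(p)=e(\ell/k^\ast)}1/p$, so the Proposition follows once the ``large'' contribution $\sum_{1\le j<k^\ast}\widehat F(j)\sum_{(\log m)/100<p\le y}\tilde\psi^j(p)/p$ is seen to be $O_\eta(1)$. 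By $\sum_{1\le j<k^\ast}|\widehat F(j)|\ll_g1$ this reduces to the key estimate
$$\Big|\sum_{(\log m)/100<p\le y}\frac{\xi(p)}{p}\Big|=O_\eta(1)\qquad\text{for every nonprincipal }\xi\pmod m.$$

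\emph{The key estimate.} This is the crux and the place where $\eta$-goodness is used: it in particular rules out a Siegel zero of $\xi$ (a real zero in $(1-\eta,1]$ would lie in the prescribed rectangle), so the de la Vall\'ee Poussin zero-free region for $\xi$ is available \emph{effectively}. I would split $((\log m)/100,y]$ into three ranges. For $(\log m)/100<p\le(\log m)^{C_\eta}$ the interval is bounded on the double-logarithmic scale, so the sum is trivially $O_\eta(1)$ with no cancellation needed. For $\exp((\log m)^2)<p\le y$ — a genuinely long range, as $m\le(\log y)^{4/7}$ — the effective bound $\sum_{n\le x}\Lambda(n)\xi(n)\ll x\exp(-c\sqrt{\log x})$ (valid since $m\le\exp(c\sqrt{\log x})$ there) with a dyadic decomposition and partial summation gives $\sum\xi(p)/p\ll\exp(-c'\log m)=o(1)$. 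The hard intermediate range is $(\log m)^{C_\eta}<p\le\exp((\log m)^2)$: here the ordinary truncated explicit formula is useless because the available zero-free height $(\log m)^2$ is too small, so instead I would use a \emph{smoothed} explicit formula. For a fixed smooth bump $\phi$, shifting the contour in $\tfrac1{2\pi i}\int_{(2)}\!\big({-}\tfrac{L'}{L}(s,\xi)\big)\widehat\phi(-s)x^s\,ds$ to $\operatorname{Re}(s)=1-\eta/2$ crosses only zeros with $|\operatorname{Im}(s)|>(\log m)^2$ (by $\eta$-goodness), whose residues are controlled by the rapid decay of $\widehat\phi$ and the trivial zero-count; this yields $\sum_n\Lambda(n)\xi(n)\phi(n/x)\ll_\eta x^{1-\eta/2}(\log m)^{O(1)}+x(\log m)^{-10}$, and a dyadic decomposition plus partial summation then give $\sum_{(\log m)^{C_\eta}<p\le\exp((\log m)^2)}\xi(p)/p=o(1)$ once $C_\eta$ is large enough in terms of $\eta$. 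Adding the three ranges proves the key estimate, hence the Proposition. I expect this intermediate range — extracting genuine cancellation from a fixed-width zero-free region of only polylogarithmic height — to be the delicate point; everything else is bookkeeping with Mertens' theorem and elementary trigonometry.
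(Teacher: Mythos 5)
Your proposal is correct, and it takes a genuinely different route from the paper's proof. The combinatorial skeleton coincides: evaluating the maximum as $\cos(\tfrac{2\pi}{g}\|g^\ast\ell_p/k^\ast\|)$ is Lemma \ref{lem:zEll}, your $\widehat F(0)$ identity is \eqref{eq:deltgId} (quoted there from \cite{GSPret}), and your coefficients $\widehat F(j)$ are the paper's $S_j$; your aliasing bound $\sum_{j\neq 0}|\widehat F(j)|\ll_g 1$ is in fact stronger than the $O(\log k)$ of Lemma \ref{lem:SjBds}, and it is correct, since the sampled function $x\mapsto\cos(\tfrac{2\pi}{g}\|x\|)$ has only a derivative jump at $x=1/2$, so its Fourier coefficients decay quadratically. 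The divergence is in the analytic input: the paper never bounds $\sum_{(\log m)/100<p\le y}\xi(p)/p$ directly, but instead inserts the Languasco--Zaccagnini refined Mertens theorem \eqref{eq:LangZac}, whose constant $C_m(a)$ is expressed through $L(1,\xi)$ and the auxiliary series $K(1,\xi)$, and uses $\eta$-goodness only through \cite[Lem. 3.4]{LamMan} to truncate $\log L(1,\tilde\psi^j)$ at $(\log m)^{5/\eta}$; you instead prove the key estimate $\sum_{(\log m)/100<p\le y}\xi(p)/p=O_\eta(1)$ for nonprincipal $\xi\pmod m$ from scratch, which is exactly the content hidden in those cited results (note your requirement $C_\eta\gg 1/\eta$ matches the paper's cutoff $(\log m)^{5/\eta}$). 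Your large range ($p>\exp((\log m)^2)$, classical zero-free region with the Siegel zero excluded by $\eta$-goodness) and all the Mertens bookkeeping are fine. The one thin spot is the intermediate range: a straight shift to $\mathrm{Re}(s)=1-\eta/2$ requires controlling $L'/L$ on the new line at heights above $(\log m)^2$, where zeros of $L(s,\xi)$ may approach the contour, so the integral cannot be bounded pointwise as written; the standard repair is to use the fully smoothed explicit formula $\sum_n\Lambda(n)\xi(n)\phi(n/x)=-\sum_\rho\tilde\phi(\rho)x^\rho+O(\cdot)$ over \emph{all} nontrivial zeros, which yields your claimed bound (indeed $\ll_\eta x^{1-\eta}(\log m)^{3}+x(\log m)^{-10}$) directly from the zero-counting formula, the rapid decay of the Mellin transform, and the $\eta$-good rectangle. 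With that standard adjustment your argument is complete. What your route buys is self-containedness and a sharper Fourier $L^1$ bound, at the cost of redoing explicit-formula work; what the paper's route buys is that this analytic work is outsourced to \cite{LanZac} and \cite{LamMan}, so only the short-sum truncation of $\log L(1,\cdot)$ needs the zero-free hypothesis.
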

\subsection{Identifying the lower order terms in $\mc{S}(y;\psi,g)$}
\noindent For each $\ell \pmod{k}$ let us fix a choice of $z_{\ell} \in \mu_g \cup \{0\}$ such that 
\begin{equation}\label{eq:zEllDef}
\max_{z  \in \mu_g \cup \{0\}} \text{Re}(z e(-\ell/k)) = \text{Re}(z_\ell e(-\ell/k)).
\end{equation}
\begin{lem} \label{lem:zEll}
For each $\ell \pmod{k}$ we have $\text{Re}(z_\ell e(-\ell/k)) = \cos(\tfrac{2\pi}{g} \|g^\ast\ell/k^\ast\|)$, and moreover
\begin{equation}\label{eq:deltgId}
\frac{1}{k}\sum_{\ell \pmod{k}} \text{Re}(z_\ell e(-\ell/k)) = (1-\delta_g)\frac{\pi/(gk^\ast)}{\tan(\pi/(gk^\ast))}.
\end{equation}
\end{lem}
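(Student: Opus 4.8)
The plan is to first identify $z_\ell$ as an explicit root of unity, and then evaluate the resulting average as a trigonometric sum. For the first assertion, observe that for $z=e(j/g)\in\mu_g$ one has
$$
\text{Re}(z\,e(-\ell/k))=\cos\!\big(2\pi(j/g-\ell/k)\big)=\cos\!\big(2\pi\|j/g-\ell/k\|\big),
$$
using that $t\mapsto\cos(2\pi t)$ is even and $1$-periodic; since this function is decreasing on $[0,1/2]$, maximising over $0\le j<g$ gives $\max_{z\in\mu_g}\text{Re}(z\,e(-\ell/k))=\cos\!\big(2\pi\min_{j\in\Z}\|j/g-\ell/k\|\big)$. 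Because $g\ge 3$, consecutive fractions $j/g$ are $1/g\le1/3$ apart in $\R/\Z$, so $\min_j\|j/g-\ell/k\|\le 1/(2g)<1/4$; hence the cosine is positive, the maximiser $z_\ell$ in \eqref{eq:zEllDef} is never $0$, and $\text{Re}(z_\ell e(-\ell/k))=\cos\!\big(2\pi\min_j\|j/g-\ell/k\|\big)$. To finish the first part I would compute $\min_{j\in\Z}\|j/g-\ell/k\|=\tfrac1g\|g\ell/k\|$ (write $j/g-\ell/k-n=\tfrac1g((j-gn)-g\ell/k)$ and let $j,n$ range over $\Z$), and then cancel $(g,k)$ to get $g\ell/k=g^\ast\ell/k^\ast$, so that $\text{Re}(z_\ell e(-\ell/k))=\cos(\tfrac{2\pi}{g}\|g^\ast\ell/k^\ast\|)$.

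For \eqref{eq:deltgId}, the first observation is that $\cos(\tfrac{2\pi}{g}\|g^\ast\ell/k^\ast\|)$ depends on $\ell$ only through $g^\ast\ell\bmod k^\ast$, hence only through $\ell\bmod k^\ast$; consequently $\tfrac1k\sum_{\ell\pmod k}$ may be replaced by $\tfrac1{k^\ast}\sum_{\ell\pmod{k^\ast}}$. Since $(g^\ast,k^\ast)=1$, multiplication by $g^\ast$ permutes $\Z/k^\ast\Z$, and so the left side of \eqref{eq:deltgId} equals $\tfrac1{k^\ast}\sum_{r=0}^{k^\ast-1}\cos(\tfrac{2\pi}{g}\|r/k^\ast\|)$. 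Here I would use that $k^\ast$ is even (as $k$ is even and $(g,k)$ is odd): the value $\|r/k^\ast\|=j/k^\ast$ (for $0\le j\le k^\ast/2$) occurs once when $j\in\{0,k^\ast/2\}$ and twice when $0<j<k^\ast/2$, so
$$
\sum_{r=0}^{k^\ast-1}\cos\!\big(\tfrac{2\pi}{g}\|r/k^\ast\|\big)=2\sum_{j=0}^{k^\ast/2}\cos\!\big(\tfrac{2\pi j}{gk^\ast}\big)-1-\cos(\pi/g).
$$
Applying the standard identity $\sum_{j=0}^M\cos(j\beta)=\tfrac12+\tfrac{\sin((M+1/2)\beta)}{2\sin(\beta/2)}$ with $M=k^\ast/2$ and $\beta=\tfrac{2\pi}{gk^\ast}$ — so that $(M+\tfrac12)\beta=\tfrac\pi g+\tfrac{\pi}{gk^\ast}$ and $\beta/2=\tfrac{\pi}{gk^\ast}$ — and expanding $\sin(\tfrac\pi g+\tfrac{\pi}{gk^\ast})=\sin(\tfrac\pi g)\cos(\tfrac{\pi}{gk^\ast})+\cos(\tfrac\pi g)\sin(\tfrac{\pi}{gk^\ast})$, the two $\cos(\pi/g)$ terms cancel and the sum collapses to $\sin(\pi/g)\cot(\pi/(gk^\ast))$. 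Dividing by $k^\ast$ and using $1-\delta_g=\tfrac g\pi\sin(\pi/g)$ (the definition of $\delta_g$) rewrites $\tfrac1{k^\ast}\sin(\pi/g)\cot(\pi/(gk^\ast))$ as $(1-\delta_g)\tfrac{\pi/(gk^\ast)}{\tan(\pi/(gk^\ast))}$, which is \eqref{eq:deltgId}.

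Everything above is elementary. The one step that demands a little care is the trigonometric sum: one must correctly fold $\|r/k^\ast\|$ — keeping in mind that $k^\ast$ is even, so that $r=0$ and $r=k^\ast/2$ are the two fixed points of $r\mapsto-r$ on $\Z/k^\ast\Z$ — and then match the resulting cotangent with the definition of $\delta_g$. I do not expect any genuine obstacle, and one can sanity-check the final formula quickly in the case $k^\ast=2$, where both sides reduce to $\tfrac12(1+\cos(\pi/g))$.
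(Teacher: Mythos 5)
Your proof is correct. For the first assertion you argue exactly as the paper does: identify the maximiser as the $g$-th root of unity nearest to $e(\ell/k)$, note that the minimal circle distance is at most $1/(2g)$ so the cosine is positive (hence $z_\ell = 0$ is never optimal, as $g \geq 3$), and rewrite $\min_j \|j/g - \ell/k\| = \tfrac{1}{g}\|g\ell/k\| = \tfrac{1}{g}\|g^\ast \ell/k^\ast\|$. Where you genuinely diverge is in \eqref{eq:deltgId}: the paper does not prove this identity at all but cites \cite[Lem. 8.4]{GSPret}, whereas you derive it from scratch — reducing the average over $\ell \pmod{k}$ to one over $\ell \pmod{k^\ast}$, using $(g^\ast,k^\ast)=1$ to replace $g^\ast\ell$ by $r$, folding the sum via $r \mapsto -r$ (correctly accounting for the two fixed points $r=0$ and $r=k^\ast/2$, legitimate since $k^\ast$ is even because $k$ is even and $(g,k)$ is odd), and then summing the Dirichlet kernel so that the $\cos(\pi/g)$ terms cancel and the total collapses to $\sin(\pi/g)\cot(\pi/(gk^\ast))$, which matches $(1-\delta_g)\tfrac{\pi/(gk^\ast)}{\tan(\pi/(gk^\ast))}$ by the definition $1-\delta_g = \tfrac{g}{\pi}\sin(\pi/g)$. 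I checked the trigonometric manipulation and the $k^\ast = 2$ sanity check; both are right. The payoff of your route is self-containedness (no appeal to the external lemma of Granville--Soundararajan), at the cost of a short explicit computation; the paper's citation is of course shorter but leaves the identity opaque to the reader.
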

\begin{proof}
We will show that we may choose $z_{\ell}^\ast \in \mu_g$ that maximises the real parts, such that $\text{Re}(z_\ell^\ast e(-\ell/k)) > 0$. Therefore, $z_\ell = z_\ell^\ast$. Given $0 \leq \ell < k$, let $0 \leq n_{\ell} \leq g$ be chosen such that $z_{\ell}^\ast = e(n_\ell/g)$. Then we have
$$
\text{Re}(z_\ell^\ast e(-\ell/k)) = \cos\left(\tfrac{2\pi}{g}\left|n_\ell - \frac{\ell g}{k}\right|\right).
$$
Choosing $n_\ell$ to be the closest integer to $\ell g/k = \ell g^\ast/k^\ast$, we get by definition that
$$
\left|n_\ell - \frac{\ell g}{k}\right| = \|\ell g^\ast/k^\ast\|.
$$
As $g \geq 3$, $\tfrac{2\pi}{g} \|\ell g^\ast/k^\ast\| \in [0,\pi/3]$, so that $z_\ell^\ast e(-\ell/k)$ must have positive real part. Thus, $z_\ell = z_{\ell}^\ast$ as asserted. The formula \eqref{eq:deltgId} is given in \cite[Lem. 8.4]{GSPret}.
\end{proof}
Applying Lemma \ref{lem:zEll} and using the definition of $k$ and the periodicity of $\psi$ modulo $m$, we may write
\begin{align}\label{eq:expinAPs}
\mc{S}(y;\psi,g) = \sum_{\ell \pmod{k}} \text{Re}(z_\ell e(-\ell/k)) \sum_{\ss{p \leq y \\ \psi(p) = e(\ell/k)}} \frac{1}{p} = \sum_{\ell \pmod{k}} \cos(\tfrac{2\pi}{g} \|g^\ast\ell/k^\ast\|) \sum_{\ss{a \pmod{m} \\ \psi(a) = e(\ell/k)}} \left(\sum_{\ss{p \leq y \\ p \equiv a \pmod{m}}} \frac{1}{p}\right).
\end{align}
By a result of Languasco and Zaccagnini \cite[Thm. 2, Cor. 3]{LanZac}, we have that for every $a \pmod{m}$ coprime to $m$,
\begin{equation} \label{eq:LangZac}
\sum_{\ss{p \leq y \\ p \equiv a \pmod{m}}} \frac{1}{p} = \frac{1}{\phi(m)} \log\log y - C_m(a) + O\left(\sum_{\ss{p \leq y \\ p \equiv a \pmod{m}}} \frac{1}{p^2} + \frac{(\log\log y)^{16/5}}{(\log y)^{3/5}}\right),
\end{equation}
where we define
\begin{equation}\label{eq:CmaDef}
C_m(a) := \frac{1}{\phi(m)} \sum_{\xi \neq \xi_0 \pmod{m}} \bar{\xi}(a) \log \frac{K(1,\xi)}{L(1,\xi)} - \frac{1}{\phi(m)}(\gamma + \log(\phi(m)/m)).
\end{equation}
Here, for a primitive character $\xi$ we have introduced the Dirichlet series 
$$
K(s,\xi) := \sum_{n \geq 1} \frac{k_{\xi}(n)}{n^s},
$$
absolutely convergent for $\text{Re}(s) > 0$, where $k_{\xi}(n)$ is the completely multiplicative function defined at primes $p$ via
\begin{equation}\label{eq:kxiDef}
k_{\xi}(p) := p\left(1-\left(1-\frac{\xi(p)}{p}\right)\left(1-\frac{1}{p}\right)^{-\xi(p)}\right).
\end{equation}
\begin{lem} \label{lem:CmaFixl}
For each $\ell \pmod{k}$,
$$
\sum_{\ss{a \pmod{m} \\ \psi(a) = e(\ell/k)}} C_m(a) = \frac{1}{k} \sum_{\ss{j \pmod{k} \\ j \neq 0}} e\left(-\frac{\ell j }{k}\right) \log \frac{K(1,\psi^j)}{L(1,\psi^j)} - \frac{1}{k} \left(\gamma + \log(\phi(m)/m)\right).
$$
\end{lem}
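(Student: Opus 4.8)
The plan is to unfold the definition \eqref{eq:CmaDef} of $C_m(a)$, interchange the orders of summation, and reduce everything to orthogonality of the characters modulo $m$. The one extra ingredient is a detector for the condition $\psi(a) = e(\ell/k)$: since $\psi$ has order exactly $k$, its powers $\psi^0,\dots,\psi^{k-1}$ are $k$ distinct characters modulo $m$ with $\psi^0 = \xi_0$, and orthogonality in the cyclic group $\langle\psi\rangle$ they generate gives, for every $a$ coprime to $m$,
$$
\mathbf{1}\bigl[\psi(a) = e(\ell/k)\bigr] = \frac1k \sum_{j \pmod{k}} e(-\ell j/k)\,\psi^j(a).
$$
Everything else is formal rearrangement.

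First I would handle the contribution of the constant $-\tfrac{1}{\phi(m)}(\gamma + \log(\phi(m)/m))$ in \eqref{eq:CmaDef}. Summing the detector identity over all $a \pmod m$ and using $\sum_{a \pmod m} \psi^j(a) = \phi(m)\,\mathbf{1}[k \mid j]$ gives $\#\{a \pmod m : \psi(a) = e(\ell/k)\} = \phi(m)/k$, so this part of $\sum_{a : \psi(a) = e(\ell/k)} C_m(a)$ contributes exactly $-\tfrac1k(\gamma + \log(\phi(m)/m))$, which is the second term in the claim.

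For the main term, substitute the detector into $\sum_{a : \psi(a) = e(\ell/k)} \bar\xi(a)$ and apply orthogonality once more in the form $\sum_{a \pmod m} \psi^j(a)\bar\xi(a) = \phi(m)\,\mathbf{1}[\xi = \psi^j]$. Swapping the sum over $\xi \neq \xi_0$ with the sum over $j \pmod{k}$ then collapses the double sum: a character $\xi \neq \xi_0$ contributes only if $\xi \in \langle\psi\rangle$, via the unique $j \pmod k$ with $\psi^j = \xi$, while characters outside $\langle\psi\rangle$ vanish. Since $\psi^j = \xi_0$ precisely when $k \mid j$, the surviving index set is exactly $j \not\equiv 0 \pmod k$; the $\phi(m)$ from orthogonality cancels the prefactor $1/\phi(m)$ from \eqref{eq:CmaDef}, and one is left with the $1/k$ coming from the detector, so the main term equals $\tfrac1k \sum_{\ss{j \pmod k \\ j \neq 0}} e(-\ell j/k)\log\frac{K(1,\psi^j)}{L(1,\psi^j)}$, as required.

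This is a purely formal computation with no analytic content beyond the well-definedness already implicit in \eqref{eq:CmaDef}, so I do not expect a genuine obstacle. The only points needing care are matters of bookkeeping: matching the exclusion $\xi \neq \xi_0$ with the exclusion $j \not\equiv 0 \pmod k$ after reindexing by powers of $\psi$, and interpreting $K(1,\psi^j)$ and $L(1,\psi^j)$ with the same convention for the (possibly imprimitive) characters $\psi^j$ modulo $m$ as is used in \eqref{eq:LangZac}--\eqref{eq:kxiDef}.
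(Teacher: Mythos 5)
Your proposal is correct and follows essentially the same route as the paper: the additive-character detector for $\psi(a)=e(\ell/k)$, orthogonality of characters modulo $m$ to collapse the sum over $\xi$ onto the powers $\psi^j$ with $j\not\equiv 0 \pmod k$, and the cardinality $\phi(m)/k$ for the fibre handling the constant term (the paper gets this via a coset argument rather than by summing the detector, a trivial difference).
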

\begin{proof}
Note first of all that $\{a \pmod{m} : \psi(a) = e(\ell/k)\}$ is a coset of the kernel $\{a \pmod{m} : \psi(a) = 1\}$, and since the range of $\psi$ has size $k$ we get by basic group theory that
\begin{equation}\label{eq:fixedPsiCard}
|\{a \pmod{m} : \psi(a) = e(\ell/k)\}| = \phi(m)/k.
\end{equation}
It therefore follows from \eqref{eq:CmaDef} that
$$
R_\ell:= \sum_{\ss{a \pmod{m} \\ \psi(a) = e(\ell/k)}} C_m(a) = \frac{1}{\phi(m)} \sum_{\xi \neq \xi_0 \pmod{m}} \log\frac{K(1,\xi)}{L(1,\xi)} \sum_{\ss{a \pmod{m} \\ \psi(a) = e(\ell/k)}} \bar{\xi}(a) - \frac{1}{k}(\gamma + \log(\phi(m)/m)). 
$$
Now, for each $(a,m) = 1$ we have by orthogonality of additive characters that
$$
1_{\psi(a) = e(\ell/k)} = \frac{1}{k} \sum_{j \pmod{k}} \psi^j(a) e(-j\ell/k).
$$
Inserting this into the previous expression and using orthogonality of Dirichlet characters modulo $m$, 
\begin{align*}
R_\ell &= \frac{1}{\phi(m)} \sum_{\xi \neq \xi_0 \pmod{m}} \log\frac{K(1,\xi)}{L(1,\xi)} \cdot \frac{1}{k} \sum_{j \pmod{k}} e(-\ell j /k) \sum_{\ss{a \pmod{m}}} \psi^j(a) \bar{\xi}(a) - \frac{1}{k}(\gamma + \log(\phi(m)/m))\\
&= \sum_{\xi \neq \xi_0 \pmod{m}} \log\frac{K(1,\xi)}{L(1,\xi)} \cdot \frac{1}{k} \sum_{j \pmod{k}} e(-\ell j/k) 1_{\xi = \psi^j} - \frac{1}{k}(\gamma + \log(\phi(m)/m))\\
&= \frac{1}{k} \sum_{\ss{j \pmod{k} \\ j \neq 0}} e(-\ell j /k) \log \frac{K(1,\psi^j)}{L(1,\psi^j)} - \frac{1}{k}(\gamma + \log(\phi(m)/m)),
\end{align*}
as claimed.
\end{proof}
\noindent In the sequel, for $j \pmod{k^\ast}$ we define
$$
S_j := \frac{1}{k^\ast} \sum_{\ell \pmod{k^\ast}} \cos(\tfrac{2\pi}{g}\|g^\ast\ell/k^\ast\|)e\left(-\frac{j\ell}{k}\right).
$$
Applying Lemma \ref{lem:CmaFixl}, multiplying the sum over $a$ therein by $\cos(\tfrac{2\pi}{g}\|g^\ast\ell/k^\ast\|)$ and summing over $\ell \pmod{k}$, we see that
\begin{align}
\sum_{\ell \pmod{k}} \cos(\tfrac{2\pi}{g}\|g^\ast \ell/k^\ast\|) \sum_{\ss{a \pmod m \\ \psi(a) = e(\ell/k)}} C_m(a) 
&= \sum_{\ss{j \pmod{k} \\ j \neq 0}} \log\frac{K(1,\psi^j)}{L(1,\psi^j)}  \left(\frac{1}{k} \sum_{\ell \pmod{k}} \cos(\tfrac{2\pi}{g} \|g^\ast \ell/k^\ast\|) e\left(-\frac{j\ell}{k}\right)\right) \label{eq:passToSj} \\
&-(\gamma + \log(\phi(m)/m)) \left(\frac{1}{k} \sum_{\ell \pmod{k}} \cos(\tfrac{2\pi}{g} \|g^\ast \ell/k^\ast\|)\right). \label{eq:S0term}
\end{align}
Note that $\|g^\ast \ell/k^\ast\|$ depends only on the residue class of $\ell$ modulo $k^\ast$. Writing $\ell = vk^\ast + u$ for $0 \leq u < k^\ast$ and $0 \leq v < (k,g)$, and summing over both $u$ and $v$, we deduce that for each $j \pmod{k}$,
$$
\sum_{\ell \pmod{k}} \cos(\tfrac{2\pi}{g} \|g^\ast \ell/k^\ast\|) e(-\ell j/k) = (g,k) 1_{(g,k)|j} \sum_{u \pmod{k^\ast}} \cos(\tfrac{2\pi}{g} \|g^\ast u/k^\ast\|) e(-u \tfrac{j/(g,k)}{k^\ast}).
$$
Replacing $j \pmod{k}$ by $j/(k,g) \pmod{k^\ast}$ whenever the latter is an integer in \eqref{eq:passToSj}, and recalling the definition of $\tilde{\psi}$ and $S_j$, the left-hand side of \eqref{eq:passToSj} becomes
$$
\sum_{\ss{j \pmod{k^\ast} \\ j \neq 0}} S_j\log\frac{K(1,\tilde{\psi}^j)}{L(1,\tilde{\psi}^j)} -  (1-\delta_g)\frac{\pi/(gk^\ast)}{\tan(\pi/(gk^\ast))}(\gamma + \log(\phi(m)/m)),
$$
using Lemma \ref{lem:zEll} to evaluate the expression in \eqref{eq:S0term}. \\
Next, combining this with \eqref{eq:expinAPs}, \eqref{eq:LangZac}, \eqref{eq:fixedPsiCard} and Lemma \ref{lem:zEll}, and recalling that $m \leq (\log y)^{4/7}$, we obtain
\begin{align}
&\mc{S}(y;\psi,g)
= \sum_{\ell \pmod{k}} \cos(\tfrac{2\pi}{g} \|g^\ast \ell/k^\ast\|)\left(\frac{1}{k} \log\log y - \sum_{\ss{a \pmod{m} \\ \psi(a) = e(\ell/k)}} C_m(a) + O\left(\frac{\phi(m)(\log\log y)^{16/5}}{k(\log y)^{3/5}}\right)\right) + O(1) \nonumber\\
&= (1-\delta_g)\frac{\pi/(gk^\ast)}{\tan(\pi/(gk^\ast))}\left(\log\log y + \gamma + \log(\phi(m)/m)\right) - \sum_{\ss{j \pmod{k} \\ j \neq 0}}S_j \log \frac{K(1,\tilde{\psi}^j)}{L(1,\tilde{\psi}^j)} + O(1). \label{eq:withSj} 
\end{align}
Our remaining task is now to estimate the sum over $j$ in \eqref{eq:withSj}.
\subsection{Estimates for $S_j$ and their consequences}
In order to control error terms that arise from approximating $\log(K(1,\tilde{\psi}^j)/L(1,\tilde{\psi}^j))$ by short sums over primes in Lemma \ref{lem:controlErr} below, we first establish the following $L^1$ bound for $|S_j|$.
\begin{lem} \label{lem:SjBds}
We have
$$
\sum_{\ss{j \pmod{k^\ast} \\ j \neq 0}} |S_j| \ll \log k.
$$
\end{lem}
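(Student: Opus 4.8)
The plan is to interpret each $S_j$ as a Fourier coefficient of the (rescaled) cosine profile $f(\ell) := \cos(\tfrac{2\pi}{g}\|g^\ast\ell/k^\ast\|)$, viewed as a function on $\mathbb{Z}/k^\ast\mathbb{Z}$, and then to bound the $\ell^1$-norm of the Fourier transform by exploiting the piecewise-smooth structure of $f$. First I would note that, since $\gcd(g^\ast,k^\ast)=1$, the map $\ell \mapsto g^\ast\ell \pmod{k^\ast}$ is a bijection of $\mathbb{Z}/k^\ast\mathbb{Z}$, so after this change of variable we are really looking at the Fourier coefficients of $\ell \mapsto \cos(\tfrac{2\pi}{g}\|\ell/k^\ast\|)$, up to relabelling the frequency $j$ by $g^\ast{}^{-1}j \pmod{k^\ast}$ (here one must be mildly careful that in $S_j$ the exponential is $e(-j\ell/k)$ with $k=(g,k)k^\ast$, but restricted to $\ell$ in a set of representatives mod $k^\ast$ this is $e(-j\ell/k^\ast)$ after absorbing the factor $(g,k)$, exactly as in the reduction to $\tilde\psi$ carried out just above the lemma). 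So it suffices to bound $\sum_{j\neq 0}|\widehat{F}(j)|$ where $F(\ell)=\cos(\tfrac{2\pi}{g}\|\ell/k^\ast\|)$ on $\mathbb{Z}/k^\ast\mathbb{Z}$.

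Next I would use summation by parts (equivalently, the standard device of writing $(1-e(-j/k^\ast))\widehat F(j)$ as the Fourier transform of the first difference $F(\ell)-F(\ell-1)$). The function $\ell\mapsto\|\ell/k^\ast\|$ is a "sawtooth/tent": its consecutive differences are $\pm 1/k^\ast$, with a sign change at the two points nearest $\ell\equiv 0$ and $\ell\equiv k^\ast/2$. Composing with the smooth function $\cos(\tfrac{2\pi}{g}\,\cdot\,)$, the first difference $\Delta F(\ell):=F(\ell)-F(\ell-1)$ is $O(1/k^\ast)$ pointwise, and its \emph{second} difference is $O(1/(k^\ast)^2)$ except at $O(1)$ exceptional points where it is $O(1/k^\ast)$. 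Applying summation by parts twice gives
\[
|\widehat F(j)| \ll \frac{1}{k^\ast\,|1-e(-j/k^\ast)|^2}\Big(\tfrac{1}{k^\ast}+\tfrac{1}{k^\ast}\Big)\ll \frac{1}{(k^\ast)^2\,\|j/k^\ast\|^2},
\]
for $j\not\equiv 0$, where I used $|1-e(-j/k^\ast)|\asymp\|j/k^\ast\|$. Summing this over $1\le j\le k^\ast-1$ (pairing $j$ with $k^\ast-j$ and using $\sum_{1\le j\le k^\ast/2} 1/j^2 = O(1)$) yields $\sum_{j\neq 0}|\widehat F(j)| = O(1)$. Pulling the relabelling of frequencies back, and remembering the overall normalisation $1/k^\ast$ built into $S_j$, this already gives $\sum_{j\pmod{k^\ast},\,j\neq 0}|S_j|=O(1)$, which is stronger than the claimed $O(\log k)$; the weaker bound as stated certainly follows, and I would simply state the $O(\log k)$ form since that is all that is needed downstream.

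The main obstacle is bookkeeping rather than depth: one must handle the $O(1)$ "corner" points of the tent function correctly (those contribute the $1/k^\ast$ rather than $1/(k^\ast)^2$ terms after the second difference, and these are exactly what prevents a bound better than $O(1)$), and one must make sure the frequency substitution $j\mapsto g^\ast{}^{-1}j$ and the relation $k=(g,k)k^\ast$ are tracked consistently so that $S_j$ really is (a constant multiple of) a genuine Fourier coefficient of $F$ on $\mathbb{Z}/k^\ast\mathbb{Z}$. An alternative, if one wants to avoid the discrete summation-by-parts entirely, is to compare $\widehat F(j)$ to the corresponding Fourier coefficient of the $1$-periodic function $x\mapsto\cos(\tfrac{2\pi}{g}\|x\|)$ on $\mathbb{R}/\mathbb{Z}$ — whose Fourier coefficients decay like $1/j^2$ by two integrations by parts, the function being Lipschitz with piecewise-continuous derivative — incurring an acceptable error from the discretisation; this is essentially the same computation organised analytically. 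Either route comfortably produces the stated $O(\log k)$ bound (indeed $O(1)$), so I would pick whichever is cleanest to write and relegate the corner-point case analysis to a line or two.
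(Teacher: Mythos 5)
Your argument is correct, and its key step is genuinely different from the paper's, yielding in fact a stronger bound. Both proofs start identically: since $(g^\ast,k^\ast)=1$, the substitution $\ell\mapsto g^\ast\ell$ identifies $S_j$ with the discrete Fourier coefficient, at the relabelled frequency $(g^\ast)^{-1}j \pmod{k^\ast}$, of $F(\ell)=\cos\bigl(\tfrac{2\pi}{g}\|\ell/k^\ast\|\bigr)$ on $\mb{Z}/k^\ast\mb{Z}$ (and your reading of the $e(-j\ell/k)$ versus $e(-j\ell/k^\ast)$ normalisation matches how the paper itself uses $S_j$ in the proof, so the relabelling is harmless for an $\ell^1$ bound). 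From there the paper writes the cosine as the average of $e\bigl(\pm\tfrac1g\|\cdot\|\bigr)$, evaluates the two resulting geometric sums in closed form, and then applies the triangle inequality to the two pieces; this bounds each $|S_j|$ by roughly $\bigl(k^\ast\|ajg/(gk^\ast)\|\bigr)^{-1}$ and so only gives the harmonic-sum bound $\ll\log k$. You instead take second differences (equivalently, compare with the Fourier coefficients of the $1$-periodic function $x\mapsto\cos(\tfrac{2\pi}{g}\|x\|)$ via aliasing), exploiting that this function is smooth except for a single corner at $x=1/2$ — the potential corner at $x=0$ is killed because cosine is even — so that $|S_j|\ll \bigl(k^\ast\|j/k^\ast\|\bigr)^{-2}$ and hence $\sum_{j\neq 0}|S_j|\ll 1$. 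Your bookkeeping (bulk second differences $O(1/(k^\ast)^2)$, plus $O(1)$ exceptional points of size $O(1/k^\ast)$ at the corner) is exactly what is needed, and the $O(1)$ conclusion is indeed true: one can check it directly, since the continuous coefficients are $\widehat f(n)=\tfrac{(-1)^{n+1}\sin(\pi/g)}{\pi g\,(n^2-1/g^2)}$, which are absolutely summable. So your route is sharper than the paper's, whose triangle inequality discards the cancellation between the two exponential pieces; nothing downstream improves, however, since in Lemma \ref{lem:controlErr} this sum is multiplied by $1/\log m$ and the stated $O(\log k)$ already suffices.
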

\begin{proof}
We write $S_j = (S_j^+ + S_j^-)/2$, where
$$
S_j^{\pm} := \frac{1}{k^\ast} \sum_{\ell \pmod{k^\ast}} e\left(\pm \frac{1}{g} \left\| \frac{g^\ast\ell}{k^\ast}\right\| - \frac{\ell j}{k^\ast}\right).
$$
Let $1 \leq a < k^\ast$ be such that $ag^\ast \equiv 1 \pmod{k^\ast}$; as $g$ is odd and $k^\ast$ is even, $a$ must be odd. Making the change of variables $\ell \mapsto g^\ast \ell$, then reparametrising, we get 
$$
S_j^{\pm} = \frac{1}{k^\ast} \sum_{-k^\ast/2 < w \leq k^\ast/2} e\left(\pm \frac{1}{g} \left\|\frac{w}{k^\ast}\right\| - \frac{a jw}{k^\ast}\right) = \frac{1}{k^\ast} \sum_{-k^\ast/2 < w \leq k^\ast/2} e\left(\pm \frac{|w|}{gk^\ast} - \frac{a j w}{k^\ast}\right).
$$
Summing each of $S_j^\pm$ over $w$, we obtain
\begin{align*}
S_j^\pm  &= \frac{1}{k^\ast} \sum_{0 \leq w' < k^\ast/2} e\left(\pm \frac{w'}{gk^\ast}\right)\left(e\left(- \frac{w'aj}{k^\ast}\right) + e\left(\frac{w'aj}{k^\ast}\right)\right) + O\left(\frac{1}{k^\ast}\right)\\
&= \frac{(-1)^j e(\pm \tfrac{1}{2g})-1}{k^\ast} \left(\frac{1}{e(\tfrac{1}{gk^\ast}(\pm 1- ajg)) - 1} + \frac{1}{e(\tfrac{1}{gk^\ast}(\pm 1+ ajg)) - 1}\right) + O\left(\frac{1}{k^\ast}\right).
\end{align*}
Applying the triangle inequality, we deduce that for each non-zero $j \pmod{k^\ast}$
$$
|S_j| \ll \frac{1}{k^\ast}\left(\frac{1}{|\sin(\tfrac{\pi}{gk^\ast}(1-ajg))|} + \frac{1}{|\sin(\tfrac{\pi}{gk^\ast}(1+ajg))|} + 1\right).
$$
Noting that $g \geq 3$, using the simple bound $|\sin(\pi \theta)| \geq 2\|\theta\|$, summing over $j \neq 0$ and making the invertible change of variables $j \mapsto aj \pmod{k^\ast}$, we get
\begin{align*}
\sum_{\ss{j \pmod{k^\ast} \\ j \neq 0}} |S_j| &\ll \frac{1}{k^\ast}\sum_{\sg \in \{-1,+1\}} \, \sum_{\ss{j \pmod{k^\ast} \\ j \neq 0}} \frac{1}{\|\tfrac{1}{gk^\ast} + \sg \tfrac{aj}{k^\ast}\|} + 1 &\ll \frac{1}{k^\ast}\sum_{\ss{1 \leq j' \leq k^\ast/2}} \frac{1}{|j'/k^\ast - 1/(gk^\ast)|} + 1\\
&\ll \sum_{\ss{1 \leq j' \leq k^\ast/2}} \frac{1}{j'} + 1 \ll \log k,
\end{align*}
as required.
\end{proof}
\begin{lem} \label{lem:controlErr}
Assume that there is some $\eta \in (0,1/2)$ such that $m$ is $\eta$-good. Then
$$
\sum_{\ss{j \pmod{k^\ast} \\ j \neq 0}} S_j \log \frac{K(1,\tilde{\psi}^j)}{L(1,\tilde{\psi}^j)} = -\sum_{\ell \pmod{k^\ast}} \cos(\tfrac{2\pi}{g} \|g^\ast \ell/k^\ast\|) \left(\sum_{\ss{p \leq (\log m)/100 \\ \tilde{\psi}(p) = e(\ell/k^\ast)}} \frac{1}{p} - \frac{1}{k^\ast} \sum_{\ss{p \leq (\log m)/100 \\ p \nmid m}} \frac{1}{p} \right) + O_{\eta}(1).
$$
\end{lem}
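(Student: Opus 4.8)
The plan is to express $\log(K(1,\tilde\psi^j)/L(1,\tilde\psi^j))$ as a Dirichlet series over prime powers, truncate it to primes $p \leq (\log m)/100$, and then reorganize the resulting double sum over $j$ back into a sum over residue classes of $\tilde\psi(p)$ using the orthogonality relation that defines the $S_j$.

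\medskip

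\emph{Step 1: Series expansion and tail control.} For a primitive character $\xi$, one has $\log(K(1,\xi)/L(1,\xi)) = \sum_p \sum_{r\geq 1} c_{\xi}(p^r)$ where, from \eqref{eq:kxiDef}, the prime terms $\log(1 - \xi(p)/p) + \log(1-1/p)^{-\xi(p)}$ cancel to leading order, leaving $c_\xi(p) = O(1/p^2)$, while the prime-power terms ($r\geq 2$) contribute $\xi(p)^r/(rp^r) + O(p^{-2})$ type quantities. Hence the contribution of prime powers $p^r$ with either $p > (\log m)/100$ or $r \geq 2$ is, after multiplying by $S_j$ and summing over $j \neq 0$, bounded by $(\sum_{j\neq 0}|S_j|) \cdot O(1) \ll \log k \cdot O(1/\log m)$ by Lemma \ref{lem:SjBds}, since $\sum_{p > (\log m)/100} 1/p^2 + \sum_{p,\, r\geq 2} 1/(rp^r) \ll 1/\log m$ and $\log k \leq \log\phi(m) \ll \log m$. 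Wait — this only gives $O(1)$ after checking $\log k / \log m \ll 1$, which holds. So up to $O(1)$, we may replace $\log(K(1,\tilde\psi^j)/L(1,\tilde\psi^j))$ by $-\sum_{p \leq (\log m)/100,\, p\nmid m} \tilde\psi^j(p)/p$ (the $p\mid m$ terms vanish since $\tilde\psi$ has modulus $m$; the sign comes from $\log L^{-1}$ dominating and $-\log(1-\tilde\psi^j(p)/p) - (-\tilde\psi^j(p)/p) = O(1/p^2)$, so the surviving main term is $-\tilde\psi(p)^j/p$).

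\medskip

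\emph{Step 2: Reorganizing via orthogonality.} Substituting this truncated sum, the left-hand side becomes
$$
-\sum_{\ss{p \leq (\log m)/100 \\ p \nmid m}} \frac{1}{p} \sum_{\ss{j \pmod{k^\ast} \\ j \neq 0}} S_j \tilde\psi(p)^j + O_\eta(1).
$$
Now $\tilde\psi(p) = e(\ell/k^\ast)$ for exactly one $\ell \pmod{k^\ast}$ (as $\tilde\psi$ has order $k^\ast$), so $\sum_{j\neq 0} S_j \tilde\psi(p)^j = \sum_{j \pmod{k^\ast}} S_j e(j\ell/k^\ast) - S_0$. By the definition of $S_j$ as the discrete Fourier coefficients of $\ell \mapsto \cos(\tfrac{2\pi}{g}\|g^\ast\ell/k^\ast\|)$, Fourier inversion gives $\sum_{j \pmod{k^\ast}} S_j e(j\ell/k^\ast) = \cos(\tfrac{2\pi}{g}\|g^\ast\ell/k^\ast\|)$, while $S_0 = \tfrac{1}{k^\ast}\sum_{\ell} \cos(\tfrac{2\pi}{g}\|g^\ast\ell/k^\ast\|) = (1-\delta_g)\tfrac{\pi/(gk^\ast)}{\tan(\pi/(gk^\ast))}$ by Lemma \ref{lem:zEll}. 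Collecting primes by the value $\ell$ with $\tilde\psi(p) = e(\ell/k^\ast)$, the main term becomes exactly $-\sum_{\ell \pmod{k^\ast}} \cos(\tfrac{2\pi}{g}\|g^\ast\ell/k^\ast\|)\big(\sum_{p \leq (\log m)/100,\, \tilde\psi(p)=e(\ell/k^\ast)} \tfrac1p - \tfrac{1}{k^\ast}\sum_{p\leq(\log m)/100,\, p\nmid m}\tfrac1p\big)$, as claimed.

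\medskip

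\emph{Main obstacle.} The delicate point is justifying the truncation at $p \leq (\log m)/100$ with only $O_\eta(1)$ error: the tail $\sum_{p > (\log m)/100} \tilde\psi^j(p)/p$ is \emph{not} absolutely small, so one cannot simply discard it termwise. Here the $\eta$-goodness hypothesis enters — zero-freeness of $\prod_{\xi\neq\xi_0} L(s,\xi)$ in a box of height $(\log m)^2$ gives, via a contour-shift / Perron-type argument (or a zero-free-region version of the prime number theorem for the progressions mod $m$), cancellation in $\sum_{p > (\log m)/100} \tilde\psi^j(p)/p$ down to size $O_\eta(1)$ \emph{uniformly} in $j$, whence summing against $S_j$ and using Lemma \ref{lem:SjBds} again costs only $\log k \cdot o(1) \ll 1$. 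Making this uniform-in-$j$ tail estimate precise, and tracking that the implied constant depends only on $\eta$ (and $g$), is the technical heart of the lemma; the combinatorial Fourier reorganization in Step 2 is then purely formal.
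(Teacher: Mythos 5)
There is a genuine gap, and it sits exactly at the point you flag as the ``technical heart.'' Your Step 2 (the Fourier reorganization via $\sum_{j\neq 0}S_j\tilde\psi(p)^j = \cos(\tfrac{2\pi}{g}\|g^\ast\ell/k^\ast\|) - S_0$ for $\tilde\psi(p)=e(\ell/k^\ast)$) is correct and is the same computation as in the paper. But your truncation in Step 1 is done in the wrong order and with an error budget that does not close. First, the bookkeeping: you claim the tail $\sum_{p>(\log m)/100}\tilde\psi^j(p)/p$ can be shown to be $O_\eta(1)$ uniformly in $j$ and that summing against $S_j$ then ``costs only $\log k\cdot o(1)\ll 1$'' --- but $O_\eta(1)$ per $j$ multiplied by $\sum_{j\neq 0}|S_j|\ll\log k$ gives $O(\log k)$, not $O(1)$; to make your route work you would need the tail to be $O(1/\log m)$ (or at least $o(1/\log k)$) uniformly in $j$. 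Second, and more seriously, that stronger bound is not obtainable from $\eta$-goodness, nor from any zero-free region, nor even from GRH: for primes $p$ of size comparable to $\log m$ (indeed up to any fixed power of $\log m$), there are far too few primes relative to the modulus $m$ for $L$-function information to yield any cancellation in $\sum_p\tilde\psi^j(p)/p$ over that range. The contour-shift argument you invoke genuinely controls the tail only beyond a threshold like $(\log m)^{5/\eta}$, not beyond $(\log m)/100$.

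The paper's proof circumvents this by ordering the steps differently. It first uses $\eta$-goodness (via \cite[Lem.~3.4]{LamMan} with $X=(\log m)^{5/\eta}$, $T=(\log m)^2$) to write, for each $j$,
$$
\log\frac{K(1,\tilde\psi^j)}{L(1,\tilde\psi^j)} = \sum_{p\leq(\log m)^{5/\eta}}\tilde\psi^j(p)\log\Bigl(1-\frac1p\Bigr)+O\Bigl(\frac1{\log m}\Bigr),
$$
so the per-$j$ error is $O(1/\log m)$ and summing against $|S_j|\ll\log k\ll\log m$ costs only $O(1)$. Only then does it perform your Step 2 reorganization, and only \emph{after} that does it shorten the range from $(\log m)^{5/\eta}$ down to $(\log m)/100$: in the reorganized form the coefficient of $1/p$ for each prime is bounded absolutely (uniformly in $k$), so the range $(\log m)/100<p\leq(\log m)^{5/\eta}$ contributes $O_\eta(1)$ by Mertens alone, with no cancellation in $\tilde\psi(p)$ needed. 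Your per-$j$ truncation at $(\log m)/100$, by contrast, demands equidistribution of $\tilde\psi^j(p)$ at a scale where none is available. (A smaller issue of the same flavour: replacing $\log(1-1/p)$ by $-1/p$ per $j$ before reorganizing also costs $O(1)$ per $j$, hence $O(\log k)$ naively; this one is harmlessly fixable because the correction terms still have the form $\tilde\psi(p)^j$ times a $j$-independent coefficient, but it signals the same misordering of truncation and reorganization.)
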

\begin{proof}
For the moment, fix a non-zero residue class $j$ modulo $k^\ast$, and for convenience set $\xi := \tilde{\psi}^j$. Rearranging \eqref{eq:kxiDef}, we find that for every $p \nmid m$,
\begin{equation}\label{eq:combKL}
-\log\left(1-\frac{k_{\xi}(p)}{p}\right) + \log\left(1-\frac{\xi(p)}{p}\right) = \xi(p) \log\left(1-\frac{1}{p}\right).
\end{equation}
As discussed in \cite[Sec. 4]{LamMan}, for every $X\geq 3$ we have
$$
\log K(1,\xi) = -\sum_{p \leq X} \log\left(1-\frac{k_{\xi}(p)}{p}\right) + O\left(\frac{1}{X}\right).
$$
Moreover, since $m$ is $\eta$-good and each $\xi$ is non-principal,
taking $X = (\log m)^{5/\eta}$, $T = (\log m)^{2}$ and $\sg_0 = 1-\eta$ in \cite[Lem. 3.4]{LamMan} we get
$$
\log L(1,\xi) = -\sum_{p\leq (\log m)^{5/\eta}} \log\left(1-\frac{\xi(p)}{p}\right) + O\left(\frac{1}{\log m}\right).
$$
Combining these latter two estimates with \eqref{eq:combKL}, we get
$$
\log\frac{K(1,\tilde{\psi}^j)}{L(1,\tilde{\psi}^j)} = \sum_{p \leq (\log m)^{5/\eta}} \tilde{\psi}^j(p) \log\left(1-\frac{1}{p}\right) + O\left(\frac{1}{\log m}\right).
$$
Multiplying by $S_j$, summing over $j \neq 0$ and applying Lemma \ref{lem:SjBds} (noting that $k \leq m$), we obtain
\begin{align*}
\sum_{\ss{j \pmod{k^\ast} \\ j \neq 0}} S_j \log \frac{K(1,\tilde{\psi}^j)}{L(1,\tilde{\psi}^j)} &= \sum_{p \leq (\log m)^{5/\eta}} \left(\sum_{\ss{j \pmod{k^\ast} \\ j \neq 0}} S_j \tilde{\psi}(p)^j\right) \log\left(1-\frac{1}{p}\right) + O\left(\frac{1}{\log m}\sum_{\ss{j \pmod{k^\ast} \\ j \neq 0}} |S_j|\right) \\
&= \sum_{p \leq (\log m)^{5/\eta}} \left(\sum_{\ss{j \pmod{k^\ast} \\ j \neq 0}} S_j \tilde{\psi}(p)^j\right) \log\left(1-\frac{1}{p}\right) + O(1).
\end{align*}
Let $\mc{M}$ denote the main term in the last expression. Let us momentarily fix $p \leq (\log m)^{5/\eta}$, $p \nmid m$. Recalling the definition of $S_j$, we get
\begin{align*}
\sum_{\ss{j \pmod{k^\ast} \\ j \neq 0}} S_j \tilde{\psi}(p)^j &= \sum_{\ell \pmod{k^\ast}} \cos(\tfrac{2\pi}{g}\|g^\ast \ell /k^\ast\|) \frac{1}{k^\ast} \left(\sum_{j \pmod{k^\ast}} (\tilde{\psi}(p) e(-\ell/k^\ast))^j - 1\right) \\
&= \sum_{\ell \pmod{k^\ast}} \cos(\tfrac{2\pi}{g}\|g^\ast \ell /k^\ast\|)\left(1_{\tilde{\psi}(p) = e(\ell/k^\ast)} - \frac{1}{k^\ast}\right).
\end{align*}
Summing over all $p \leq (\log m)^{5/\eta}$ with $p \nmid m$, then using Mertens' theorem to handle the range
$(\log m)/100 < p \leq (\log m)^{5/\eta}$ in both sums, we get
\begin{align*}
\mc{M} &= \sum_{\ell \pmod{k^\ast}} \cos(\tfrac{2\pi}{g}\|g^\ast \ell /k^\ast\|)\left(\sum_{\ss{p \leq (\log m)^{5/\eta} \\ \tilde{\psi}(p) = e(\ell/k^\ast)}} \log\left(1-\frac{1}{p}\right) - \frac{1}{k^\ast} \sum_{\ss{p \leq (\log m)^{5/\eta} \\ p \nmid m}} \log\left(1-\frac{1}{p}\right)\right) \\
&= -\sum_{\ell \pmod{k^\ast}} \cos(\tfrac{2\pi}{g}\|g^\ast \ell /k^\ast\|)\left(\sum_{\ss{p \leq (\log m)/100 \\ \tilde{\psi}(p) = e(\ell/k^\ast)}} \frac{1}{p} - \frac{1}{k^\ast} \sum_{\ss{p \leq (\log m)/100 \\ p \nmid m}} \frac{1}{p} \right) + O_{\eta}(1).
\end{align*}
This implies the claim.
\end{proof}
\begin{proof}[Proof of Proposition \ref{prop:Spsig}]
Combining Lemma \ref{lem:controlErr} with \eqref{eq:withSj}, noting that the contribution from $\gamma$ to the sum is $O(1)$, we get
\begin{align*}
\mc{S}(y;\psi,g) &= (1-\delta_g)\frac{\pi/(gk^\ast)}{\tan(\pi/(gk^\ast))}\left(\log\log y - \log(m/\phi(m))\right) \\
&+\sum_{\ell \pmod{k^\ast}} \cos(\tfrac{2\pi}{g} \|g^\ast \ell/k^\ast\|) \left(\sum_{\ss{p \leq (\log m)/100 \\ \tilde{\psi}(p) = e(\ell/k^\ast)}} \frac{1}{p} - \frac{1}{k^\ast} \sum_{\ss{p \leq (\log m)/100 \\ p \nmid m}} \frac{1}{p} \right) + O_{\eta}(1),
\end{align*}
Now, by Mertens' theorem and the crude bound $\omega(m) \ll \log m$, we obtain
\begin{align}
\sum_{\ss{p \leq (\log m)/100 \\ p \nmid m}} \frac{1}{p} &= \log\log\log m - \sum_{\ss{p|m \\ p \leq (\log m)/100}} \frac{1}{p} + O(1) \nonumber\\
&= \log\log\log m - \log(m/\phi(m)) + O(1). \label{eq:pDivm}
\end{align}
Inserting this into the previous expression for $\mc{S}(y;\psi,g)$, then recalling \eqref{eq:deltgId} and rearranging, we obtain the proposition.
\end{proof}
\section{Relating the upper bound for $M(\chi)$ to $\mc{S}(y;\psi,g)$}
In this section we recall several key results arising in \cite{LamMan} that allow us to connect our work on $\mc{S}(y;\psi,g)$ to the proof of Theorem \ref{thm:oddOrdMag}.\\
Let $q \geq 3$, and in the sequel we write 
$Q := \log q$.
Put $\mb{U} := \{z \in \mb{C} : |z| \leq 1\}$, and for $x \geq 1$ and multiplicative functions $f,g : \mb{N} \ra \mb{U}$, define
$$
\mb{D}(f,g;x) := \left(\sum_{p \leq x} \frac{1-\text{Re}(f(p)\bar{g}(p))}{p}\right)^{1/2}.
$$
Also, if $T >0$
then we write
$$
\mc{M}(f;x,T) := \min_{|t| \leq T} \mb{D}(f,n^{it};x)^2. 
$$
\begin{thm}[\cite{LamMan}, Thm 2.3] \label{thm:LMUpp}
Assume GRH. Let $\chi$ be a primitive character modulo $q$. Let $\psi$ be the odd primitive character of conductor below $(\log Q)^{4/7}$ for which $\psi \mapsto \mc{M}(\chi\bar{\psi};Q, (\log Q)^{-7/11})$ is minimised. Then
$$
M(\chi) \ll \frac{\sqrt{qm}}{\phi(m)}(\log Q) \exp\left(-\mc{M}(\chi\bar{\psi}; Q,(\log Q)^{-7/11})\right) + \sqrt{q}(\log Q)^{9/11 + o(1)}.
$$
\end{thm}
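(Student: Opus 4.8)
\emph{Proof proposal.} This is a ``pretentious'' P\'olya--Vinogradov bound in the style of \cite{GSPret} and \cite{Gold}, and I will only outline the strategy. It proceeds in four stages: (i) use P\'olya's Fourier expansion to reduce $M(\chi)$ to a maximum of additively twisted partial sums of $\bar\chi(n)/n$; (ii) remove the additive twist via Dirichlet approximation and Gauss sums, which produces the weight $\sqrt{qm}/\phi(m)$ and reduces matters to a multiplicative sum attached to an odd primitive character $\psi$ of small conductor; (iii) bound that sum by a Hal\'asz-type estimate, which introduces the unitary twist $n^{it}$; and (iv) use GRH to contract the relevant prime range down to $Q=\log q$, which is the source of the main saving. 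Concretely, since $g$ is odd the character $\chi$ is even, and P\'olya's formula gives, for every $t\geq 1$,
$$
\sum_{n\leq t}\chi(n)=\frac{\tau(\chi)}{\pi}\sum_{1\leq n\leq q}\frac{\bar\chi(n)}{n}\sin\!\Big(\frac{2\pi n t}{q}\Big)+O(\sqrt q),
$$
so that $M(\chi)\ll\sqrt q\,\max_{\beta\in\R}\big|\sum_{1\leq n\leq q}\bar\chi(n)e(n\beta)/n\big|+O(\sqrt q)$ (the length of the inner sum must remain comparable to $q$ to keep P\'olya's error term admissible).

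Next, for a given $\beta$ I would apply Dirichlet's theorem to write $\beta=b/r+\theta$ with $(b,r)=1$, $r\leq R$ and $|\theta|\leq 1/(rR)$, where $R$ is a small power of $q$. If $r$ exceeds the threshold $(\log Q)^{4/7}$ one completes the twisted partial sums $\sum_{n\leq u}\bar\chi(n)e(n\beta)$ modulo $qr$ and extracts the cancellation afforded by the resulting Gauss sums together with a Weyl-type estimate, bounding this contribution by $\sqrt q(\log Q)^{9/11+o(1)}$. If instead $r\leq(\log Q)^{4/7}$, one expands $e(nb/r)$ over residues modulo $r$ in terms of Gauss sums of Dirichlet characters and regroups the terms according to the \emph{primitive} character $\psi$, of some conductor $m\mid r$, to which each reduces; the rapidly convergent divisor sums arising in this regrouping produce precisely the weight $\sqrt{qm}/\phi(m)$, and the parity of the $\sin$ in P\'olya's formula forces only \emph{odd} $\psi$ to contribute non-negligibly. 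One is thereby reduced to estimating $\big(\sqrt{qm}/\phi(m)\big)\big|\sum_{n\leq qm}\chi\bar\psi(n)n^{i\gamma}/n\big|$, the effect of $\theta$ surviving only as a mild unitary twist $n^{i\gamma}$.

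To bound the multiplicative sum I would invoke the Hal\'asz--Montgomery--Tenenbaum / Granville--Soundararajan estimate for $\sum_{n\leq x}f(n)n^{i\gamma}/n$ with $f=\chi\bar\psi$, which has the shape $\ll(\log x)(1+\mc{M})\exp(-\mc{M})$ up to an error $\ll 1/T$ from restricting the resonating twist to $|t|\leq T$ (this restriction is essentially free, since a resonance at $n^{it_0}$ with $|t_0|>T$ already forces the weighted sum to be $\ll 1/|t_0|<1/T$), where $\mc{M}=\mc{M}(\chi\bar\psi;x,T)$; taking $T=(\log Q)^{-7/11}$ keeps this error within $\sqrt q(\log Q)^{9/11+o(1)}$ and the factor $1+\mc{M}$ within $(\log Q)^{o(1)}$. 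Finally I would contract the prime range using GRH: writing $\mb{D}(\chi\bar\psi,n^{it};x)^2=\mb{D}(\chi\bar\psi,n^{it};Q)^2+\sum_{Q<p\leq x}\frac{1-\text{Re}(\chi\bar\psi(p)p^{-it})}{p}$, one uses GRH (via the explicit formula, to approximate $\log L(1+it,\chi\bar\psi)$ by a short Euler product) to control $\sum_{Q<p\leq x}\frac{\text{Re}(\chi\bar\psi(p)p^{-it})}{p}$ uniformly for $|t|\leq T$; reorganising the attendant iterated-logarithmic factors then brings the bound to the form $\frac{\sqrt{qm}}{\phi(m)}(\log Q)\exp(-\mc{M}(\chi\bar\psi;Q,T))$. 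Minimising over $|t|\leq T$ and then over odd primitive $\psi$ of conductor below $(\log Q)^{4/7}$ only decreases the right-hand side and identifies $\psi$ as in the statement, yielding the asserted inequality.

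I expect the main obstacles to be two-fold. The harder analytic point is stage (ii) in the regime $r>(\log Q)^{4/7}$: one must extract honest cancellation from $\sum_{n\leq u}\bar\chi(n)e(n\beta)$ while conceding no more than $(\log Q)^{o(1)}$, and it is the limit of that cancellation, together with the requirement that the divisor sums in the Gauss-sum regrouping be $O(1)$ (which relies on $r$ having few prime factors), that pins down the exponent $4/7$. The conceptually decisive point is the GRH contraction in stage (iv): carrying it out with the correlation sum over $(Q,x]$ genuinely under control, and keeping careful track of the resulting powers of $\log\log q$ and $\log\log\log q$, is precisely what converts $\log q$ into $\log Q=\log\log q$ and so accounts for essentially all of the improvement over the classical $\sqrt q\log q$. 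The remaining exponents $7/11$ and $9/11$ then fall out of balancing the error terms of stages (ii) and (iii) against each other.
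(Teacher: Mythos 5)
Note first that the paper itself contains no proof of this statement: Theorem \ref{thm:LMUpp} is imported verbatim from \cite{LamMan} (Theorem 2.3 there), so your proposal can only be measured against the argument given in that paper, which indeed follows the Granville--Soundararajan/Goldmakher pretentious strategy you describe. At the level of architecture your outline is faithful to it: P\'olya's Fourier expansion, rational approximation of the additive frequency, the major-arc reduction to an odd primitive $\psi$ of small conductor carrying the weight $\sqrt{qm}/\phi(m)$ (with the parity of the sine correctly singling out odd $\psi$ for the even character $\chi$), a Hal\'asz-type bound for the logarithmically weighted sum with the twist restricted to $|t|\leq(\log Q)^{-7/11}$, and the GRH step, correctly placed, showing that the primes $p>Q=\log q$ contribute $O(1)$ to the relevant prime sums, which is exactly what converts $\log q$ into $\log Q$ in both the prefactor and the distance.

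The genuine gap is your treatment of the large-denominator range $r>(\log Q)^{4/7}$. Completing $\sum_{n\leq u}\bar{\chi}(n)e(n\beta)$ modulo $qr$ and invoking Gauss sums gives, uniformly in $\beta$, nothing better than $\ll\sqrt{q}\log q$ for the unweighted partial sums, and after partial summation against the weight $1/n$ this yields only $\sum_{n\leq q}\bar{\chi}(n)e(n\beta)/n\ll\log q$, i.e.\ no saving over P\'olya--Vinogradov and very far from the required $(\log Q)^{9/11+o(1)}=(\log\log q)^{9/11+o(1)}$; a Weyl-type estimate does not help since $\bar{\chi}(n)$ is not a polynomial phase, and no argument using only the periodicity of $\bar{\chi}$ can possibly work, because for a general bounded periodic coefficient sequence the weighted sum genuinely has size $\log q$. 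What is actually used in this range in \cite{MV}, \cite{Gold} and \cite{LamMan} is Montgomery and Vaughan's estimate for exponential sums with multiplicative coefficients, in Goldmakher's logarithmically averaged refinement, which exploits the multiplicativity of $\bar{\chi}$ through bilinear (Vaughan-type) decompositions and saves a power of the denominator $r$ rather than a power of $q$; balancing that saving against the conductor cutoff and the Hal\'asz truncation is precisely what produces the exponents $4/7$, $7/11$ and $9/11$, which in your sketch are merely asserted to ``fall out''. So the road map is essentially the right one, but the step you yourself flag as the hard analytic point is, as proposed, carried out with a tool that fails.
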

\begin{prop}[\cite{LamMan}, Prop. 5.1]  \label{prop:MxT}
Let $\chi$ be a primitive character modulo $q$ of odd order $g \geq 3$ and let $\psi$ be an odd primitive character modulo $m \leq (\log Q)^{4/7}$. Let $t \in \mb{R}$, $|t| \leq (\log Q)^{-7/11}$, and set $z := \exp((\log Q)^{7/11})$. Then
$$
\sum_{z < p \leq Q} \frac{1-\text{Re}(\chi(p)\bar{\psi}(p)p^{-it})}{p} \geq \delta_g \log\left(\frac{\log Q}{\log z}\right) + O(1).
$$
\end{prop}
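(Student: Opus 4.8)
The plan is to reduce the assertion to an upper bound for the ``maximal correlation sum''
\[
\mc T:=\sum_{z<p\le Q}\frac{h(\theta_p)}{p},\qquad h(\theta):=\max_{w\in\mu_g\cup\{0\}}\text{Re}\bigl(we^{-i\theta}\bigr)=\max_{0\le j<g}\cos\Bigl(\tfrac{2\pi j}{g}-\theta\Bigr),\quad \theta_p:=\arg\bigl(\bar\psi(p)p^{-it}\bigr),
\]
and then to evaluate $\mc T$ by Fourier analysis on the circle. First note that $z=\exp((\log Q)^{7/11})$ exceeds $m\le(\log Q)^{4/7}$, so every prime in $(z,Q]$ is coprime to $m$ and $\theta_p$ (with $e^{i\theta_p}=\bar\psi(p)p^{-it}$) is well defined throughout the range; note also $h\ge\cos(\pi/g)>0$, so the value $w=0$ above never matters. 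For $p\mid q$ the summand $\bigl(1-\text{Re}(\chi(p)\bar\psi(p)p^{-it})\bigr)/p$ is exactly $1/p$, while for $p\nmid q$ we have $\chi(p)\in\mu_g$ and hence $\text{Re}(\chi(p)\bar\psi(p)p^{-it})\le h(\theta_p)$; discarding non-negative terms and using Mertens' theorem gives
\[
\sum_{z<p\le Q}\frac{1-\text{Re}(\chi(p)\bar\psi(p)p^{-it})}{p}\ \ge\ \sum_{z<p\le Q}\frac1p-\mc T\ =\ \log\!\Bigl(\tfrac{\log Q}{\log z}\Bigr)-\mc T+o(1),
\]
so it suffices to prove $\mc T\le(1-\delta_g)\log(\log Q/\log z)+O(1)$.

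Expand $h$ in its Fourier series with respect to the period $2\pi/g$: $h(\theta)=\sum_{n\in\mb Z}c_ne^{ing\theta}$ with $c_0=\tfrac g\pi\sin(\pi/g)=1-\delta_g$ and, by a direct computation, $c_n=\tfrac{(-1)^{n+1}g\sin(\pi/g)}{\pi(g^2n^2-1)}$ for $n\ne0$ (these are real and $|c_n|\ll_g n^{-2}$). Truncating at $|n|\le N:=\lceil\log\log Q\rceil$ costs $O_g(1/N)$ pointwise, hence $O_g(1)$ after summation against $1/p$ over $z<p\le Q$. Since $e^{ig\theta_p}=(\bar\psi(p)p^{-it})^g$, the $n$-th term of the truncation is $c_n\sum_{z<p\le Q}\bar\psi^{ng}(p)p^{-ingt}/p$, and the Dirichlet character $\bar\psi^{ng}$ modulo $m$ is non-principal exactly when $k^\ast\nmid n$ (recall $k^\ast=k/(g,k)$, and that $k^\ast$ is even since $k$ is). For such $n$ one writes the sum over $(z,Q]$ as a difference of two sums --- over $(0,Q]$ and over $(0,z]$ --- of the primitive character inducing $\bar\psi^{ng}$; on GRH the ``principal part'' $\log L(1+ingt,\cdot)$ is common to both and cancels, leaving an absolutely convergent prime-power contribution plus a tail that is negligible because $|ngt|\le gN(\log Q)^{-7/11}\to0$. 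Thus each such prime sum is $O(1)$ uniformly, and these modes contribute $O_g(\sum_n|c_n|)=O_g(1)$ to $\mc T$. (GRH is far more than is needed here; as $m$ is tiny, Siegel--Walfisz suffices.) The constant mode contributes $(1-\delta_g)(\log(\log Q/\log z)+o(1))$.

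What remains are the ``resonant'' modes $k^\ast\mid n$, $n\ne0$, for which $\bar\psi^{ng}$ is principal and the prime sum equals $\sum_{z<p\le Q}p^{-ingt}/p$. Summing these first over $n$ collapses them (up to $O_g(1)$, from truncation) to $\sum_{z<p\le Q}\rho(-t\log p)/p$, where $\rho:=H-(1-\delta_g)$ and
\[
H(\theta):=\sum_{k^\ast\mid n}c_ne^{ing\theta}=\frac1{k^\ast}\sum_{r=0}^{k^\ast-1}h\Bigl(\theta+\frac{2\pi r}{gk^\ast}\Bigr)
\]
is an even, $\tfrac{2\pi}{gk^\ast}$-periodic function of mean $1-\delta_g$. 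The prime number theorem (with classical error term --- no hypothesis needed, the error being $o(1)$ since $\log z=(\log Q)^{7/11}\to\infty$ while the total variation of $w\mapsto\rho(-tw)$ on $[\log z,\log Q]$ is $\le\|\rho'\|_\infty|t|\log Q\le(\log Q)^{4/11}$) turns this into $\int_{\log z}^{\log Q}\rho(-tv)\,v^{-1}\,dv+o(1)$. So the whole problem reduces to showing
\[
\int_{\log z}^{\log Q}\frac{H(-tv)-(1-\delta_g)}{v}\,dv\le O_g(1)\qquad\text{uniformly in }|t|\le(\log Q)^{-7/11}\text{ and in }k^\ast.
\]
For $t=0$ this equals $(H(0)-(1-\delta_g))\log(\log Q/\log z)\le0$, because $H(0)=(1-\delta_g)\tfrac{\pi/(gk^\ast)}{\tan(\pi/(gk^\ast))}<1-\delta_g$ by \eqref{eq:deltgId} (using that $g^\ast=g/(g,k)$ is coprime to $k^\ast$). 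For $t\ne0$, substituting $w=|t|v$ (and using that $H$ is even) reduces it to $\int_a^{aR}\rho(w)\,w^{-1}\,dw$ with $a:=|t|\log z>0$ and $R:=\log Q/\log z=(\log Q)^{4/11}$.

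From the coefficients $c_{k^\ast l}=\tfrac{-g\sin(\pi/g)}{\pi(g^2(k^\ast)^2l^2-1)}$ one reads off that $\rho$ has period $P:=\tfrac{2\pi}{gk^\ast}$, mean zero, amplitude $\|\rho\|_\infty\ll_g(k^\ast)^{-2}$, a corner and a minimum at the origin with $\rho'(0^+)=\sin(\pi/g)/k^\ast$ and $\|\rho'\|_{L^\infty(0,P)}\ll_g1/k^\ast$, and is negative on $(-w_0,w_0)+P\mb Z$ for some $w_0\asymp_g1/k^\ast$. Splitting $[a,aR]$ at a point $\asymp_gP$: on an arc $(0,w_0)$ about the origin the integrand is $\le0$; on a piece with $w$ bounded below by $\gg_gP$ an integration by parts ($\int_b^c\rho\,w^{-1}\,dw\ll\|\!\int_0^{\cdot}\rho\|_\infty/b\ll_gP\|\rho\|_\infty/b$) gives $\ll_g\|\rho\|_\infty=O_g(1)$; and a piece within a single period near the origin contributes $\le w_0^{-1}\int_0^P\rho^+\ll_g\|\rho\|_\infty=O_g(1)$. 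What makes this work uniformly in $k^\ast$ is that $H$ exceeds its mean only by $O_g((k^\ast)^{-2})$, together with the fact that the wide range $z<p\le Q$ forces $-t\log p$ either to stay inside an arc about $0$ (where $H$ dips below its mean) or to run through many periods of $H$, so that it never resonates near $\max H$. Assembling the constant, non-resonant and resonant contributions yields $\mc T\le(1-\delta_g)\log(\log Q/\log z)+O_g(1)$, which proves the proposition. I expect this last step --- the oscillatory-integral bound, uniform in $k^\ast$ (both the period and amplitude of $\rho$ degenerate as $k^\ast\to\infty$, so one must use the precise shape of $H$ near its minimum) --- to be the main obstacle.
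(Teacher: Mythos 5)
This proposition is not proved in the paper: it is quoted directly from \cite{LamMan} (Prop.\ 5.1), so the only comparison is with that source, whose strategy (going back to Granville--Soundararajan and Goldmakher) your proposal essentially follows: majorise $\text{Re}(\chi(p)\bar{\psi}(p)p^{-it})$ by the maximum over $\mu_g\cup\{0\}$, Fourier-expand the maximum function, dispose of the non-principal modes $\bar{\psi}^{ng}$ (those with $k^\ast\nmid n$) by Siegel--Walfisz plus partial summation (GRH is neither assumed in the statement nor needed; the only cost of Siegel--Walfisz is that the $O(1)$ becomes ineffective), and treat the resonant modes $k^\ast\mid n$ separately. Your argument is correct, and the final step you flag as the main obstacle does go through --- in fact it is easier than you fear. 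Since $k$ is even and $g$ odd, $k^\ast$ is even, so every resonant coefficient $c_{k^\ast l}=(-1)^{k^\ast l+1}g\sin(\pi/g)/\bigl(\pi(g^2k^{\ast 2}l^2-1)\bigr)$ is \emph{negative}; pairing the modes $\pm n$, the resonant contribution beyond the constant mode equals $2\sum_{l\ge1}c_{k^\ast l}\sum_{z<p\le Q}\cos(lgk^\ast t\log p)/p$, so one only needs the one-sided bound $\sum_{z<p\le Q}\cos(\tau\log p)/p\ge -O(1)$ uniformly in $\tau$, which by your own PNT/partial-summation reduction is just the elementary fact that $\int_a^b\cos(w)\,w^{-1}\,dw\ge -O(1)$ for all $0<a<b$ (positivity for $w<1$, integration by parts for $w\ge1$). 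This bypasses the uniform-in-$k^\ast$ analysis of $H$ near its minimum altogether; alternatively, your route also closes as sketched, since the negativity and size of the $c_{k^\ast l}$ give (by comparison with $\sum_{l\ge1}\cos(lu)/l^2=\pi^2/6-\pi u/2+u^2/4$) that $\rho<0$ on an arc of length $\asymp 1/(gk^\ast)$ about each period point and $\|\rho\|_\infty\ll (k^\ast)^{-2}$, after which your splitting (discard the negative arc, bound complete periods via the bounded antiderivative, one partial period) yields the required $O_g(1)$.
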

\begin{proof}[Proof of Theorem \ref{thm:oddOrdMag}]
Let $\chi$ be a primitive character modulo $q$ of odd order $g \geq 3$. Assuming GRH and applying Theorem \ref{thm:LMUpp}, we can find an odd primitive character $\psi$ of conductor $m \leq (\log Q)^{4/7}$ for which
\begin{align} \label{eq:MchiUppfirst}
M(\chi) \ll \frac{\sqrt{qm}}{\phi(m)}(\log Q) \exp\left(-\mc{M}(\chi\bar{\psi}; Q,(\log Q)^{-7/11})\right) + \sqrt{q}(\log Q)^{9/11 + o(1)}.
\end{align}
We may assume henceforth that there is a $9/11< c < 1-\delta_g$ such that $M(\chi) \gg \sqrt{q} (\log Q)^c$, since otherwise Theorem \ref{thm:oddOrdMag} follows as soon as $q$ is sufficiently large. Thus, by increasing the implicit constant as needed, when $q$ is large enough we obtain from \eqref{eq:MchiUppfirst} the bound
\begin{equation}\label{eq:MchiUpp}
M(\chi) \ll \frac{\sqrt{qm}}{\phi(m)}(\log Q) \exp\left(-\mc{M}(\chi\bar{\psi}; Q,(\log Q)^{-7/11})\right).
\end{equation}
Set $\alpha := 7/11$, so that $z = \exp((\log Q)^\alpha)$. By a standard application of Taylor expansion and Mertens' theorems, it is easy to show that as $|t| \leq \tfrac{1}{\log z}$,
$$
\mb{D}(\chi,\psi n^{it};z)^2 = \mb{D}(\chi,\psi; z)^2 + O(1).
$$
Thus, by Proposition \ref{prop:MxT} we have
\begin{equation}\label{eq:McmBd}
\mc{M}(\chi\bar{\psi}; Q,(\log Q)^{-\alpha}) \geq (1-\alpha) \delta_g\log\log Q + \mb{D}(\chi,\psi; z)^2 + O(1).
\end{equation}
Furthermore, it is clear that
$$
\mb{D}(\chi,\psi;z)^2 = \log\log z - \sum_{p \leq z} \frac{\text{Re}(\chi(p)\bar{\psi}(p))}{p} +O(1) \geq \alpha \log\log Q - \mc{S}(z;\psi,g) + O(1).
$$
For convenience, in the sequel we write $G(x) := x/\tan x$ whenever $x \neq \pi r$ for some $r \in \mb{Z}$. By Proposition \ref{prop:Spsig} (which applies with any $\eta \in (0,1/2)$ as we are assuming GRH), we thus obtain
\begin{align*}
\mb{D}(\chi,\psi;z)^2 &\geq  \left(1-(1-\delta_g)G(\pi/(gk^\ast))\right) \alpha\log\log Q + (1-\delta_g)G(\pi/(gk^{\ast}))\log\log\log m \\
&-\sum_{\ell \pmod{k^\ast}} \cos(\tfrac{2\pi}{g}\|g^\ast \ell/k^\ast\|)\left(\sum_{\ss{p \leq (\log m)/100 \\ \tilde{\psi}(p) = e(\ell/k^\ast)}} \frac{1}{p}\right) + O(1).
\end{align*}
Applying the trivial upper bound $\cos(\tfrac{2\pi}{g}\|g^\ast \ell/k^\ast\|) \leq 1$ for each $\ell \pmod{k^\ast}$, we thus obtain by \eqref{eq:pDivm} that
$$
\mb{D}(\chi,\psi;z)^2 \geq \left(1-(1-\delta_g)G(\pi/(gk^\ast))\right) (\alpha \log\log Q- \log\log\log m) + \log(m/\phi(m)) + O(1). 
$$
Inserting this bound into \eqref{eq:McmBd} and using the Taylor expansion $G(x) = 1 - cx^2 + O(x^4)$ with some $c > 0$ (whose precise value will be irrelevant), valid for all $0 < x \leq \pi/6$ (and recalling that $gk^\ast = g^\ast k \geq 6$), it follows that
\begin{align*}
\mc{M}(\chi\bar{\psi}; Q,(\log Q)^{-\alpha}) &\geq \delta_g (\log\log Q- \log\log\log m) + \frac{\alpha c\pi^2(1-\delta_g)}{(gk^\ast)^2} \log\log Q + \log(m/\phi(m)) \\
&+ O\left(1+\frac{\log\log Q}{k^4} + \frac{\log\log\log m}{k^2}\right).
\end{align*}
Inserting this lower bound into \eqref{eq:MchiUpp}, we see that
\begin{align*}
\log\left(\frac{M(\chi)}{\sqrt{q}}\right) &\leq (1-\delta_g) \log\log Q  + \delta_g \log\log\log m - \left(\frac{1}{2} \log m + \frac{\alpha c\pi^2(1-\delta_g)}{(gk^\ast)^2}\log\log Q\right) \\
&+ O\left(1 + \frac{\log\log Q}{k^4} + \frac{\log\log\log m}{k^2}\right).
\end{align*}
Setting $c_1 := \alpha c \pi^2 (1-\delta_g)/(g^{\ast})^2$, we see that 
$$
\frac{1}{2} \log m + \frac{\alpha c\pi^2(1-\delta_g)}{(gk^\ast)^2}\log\log Q \geq \frac{1}{2} \log \phi(m) + \frac{c_1 \log\log Q}{\phi(m)^2},
$$
a bound which is attained (up to $O(1)$ error) whenever $m$ is prime and $k = \phi(m) = m-1$.  Moreover, the above expression is minimised when $m = \sqrt{4c_1\log\log Q} + O(1)$. Thus, we see that at this optimal point (with $m = \sqrt{4c_1\log\log Q} + O(1)$ prime, $k = \phi(m)$),
$$
\log\left(\frac{M(\chi)}{\sqrt{q}}\right) \leq  (1-\delta_g) \log\log Q - \frac{1}{4} \log\log \log Q + \delta_g \log\log\log \log\log Q + O(1),
$$
and the claim follows on exponentiating.
\end{proof}
\section{Relating the lower bound on $M(\chi)$ to $\mc{S}(y;\psi,g)$}
In this section we will prove Theorem \ref{thm:sharp}. In this direction, the following result from \cite{LamMan} is relevant.
\begin{thm}[special case of \cite{LamMan}, Thm. 2.5] \label{thm:LMLow}
Let $N$ be large and let $m \leq \log N$.
Let $\psi$ be an odd primitive character modulo $m$. Then for all but at most $O(N^{1/4})$ primitive characters $\chi$ of conductor $q \leq N$ we have
\begin{equation}\label{eq:lowBdMChi}
M(\chi) + \sqrt{q} \gg \frac{\sqrt{qm}}{\phi(m)} (\log Q) \exp(-\mb{D}(\chi,\psi;Q)^2).
\end{equation}
\end{thm}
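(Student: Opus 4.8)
The plan is to obtain Theorem \ref{thm:LMLow} as the specialization of \cite[Thm. 2.5]{LamMan} in which the auxiliary character is taken to be the given odd primitive $\psi$ modulo $m$ and the twisting parameter is set to $0$, so that the resonance quantity appearing there becomes $\mb{D}(\chi,\psi;Q)^2$ and the archimedean prefactor becomes $\sqrt{qm}/\phi(m)$. For orientation I recall the mechanism behind \cite[Thm. 2.5]{LamMan}: one starts from P\'olya's Fourier expansion, writing for $\chi$ primitive modulo $q$, $\alpha \in (0,1]$ and a large truncation $H$
$$
\sum_{n \leq \alpha q}\chi(n) = \frac{\tau(\chi)}{2\pi i}\sum_{1 \leq |n| \leq H}\frac{\bar\chi(n)}{n}\bigl(1 - e(-n\alpha)\bigr) + (\text{acceptable error}),
$$
so that, as $|\tau(\chi)| = \sqrt q$, it suffices to find $\alpha$ making the Dirichlet polynomial on the right large. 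Choosing $\alpha$ adapted to $\psi$, so that the frequencies along which $\chi$ pretends to be $\bar\psi$ reinforce one another, turns the relevant sum into a truncated Euler product $\prod_{p \leq Q}\bigl|1 - \chi(p)\bar\psi(p)/p\bigr|^{-1}$, which by Mertens' theorem is $\asymp (\log Q)\exp(-\mb{D}(\chi,\psi;Q)^2)$; the factor $m/\phi(m)$ absorbs the primes dividing $m$, the factor $\sqrt m$ the length of the admissible range of $\alpha$, and the hypothesis $m \leq \log N$ keeps all of this inside the error terms uniformly for $q \leq N$.

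The one genuinely delicate point is the exceptional set: the choice of $\alpha$, and the passage between a short Dirichlet polynomial and the Euler product, must succeed \emph{simultaneously} for all but $O(N^{1/4})$ of the primitive characters of conductor at most $N$. This is secured by a mean value (large sieve type) bound for the relevant Dirichlet polynomials averaged over such characters, the exponent $1/4$ reflecting the polynomial length together with the restriction $m \leq \log N$. I would regard this averaging step as the crux of Theorem \ref{thm:LMLow}; everything else is Euler-product bookkeeping of Mertens type.

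With Theorem \ref{thm:LMLow} in hand, the remainder of the section proves Theorem \ref{thm:sharp}. First one constructs the auxiliary character: by Bujold's theorem \cite{Buj}, for most prime moduli $\mu$ there are many characters $\xi \pmod{\mu}$ with $\xi(-1) = -1$ and $\xi(p) = 1 + o(1)$ uniformly for $p \ll \log \mu$; sieving for a large prime $m$, with $m-1$ almost free of small prime factors, for which Bujold's conclusion holds and which is moreover $\eta$-good for a fixed $\eta \in (0,1/2)$ (a condition excluding only rare moduli), one finds among these an \emph{odd} character, which we call $\psi$, of order $k \geq (m-1)/2$. Next one constructs $\chi$: following the method of \cite[Sec. 4]{LamMan} one builds a primitive character $\chi$ of order $g$ and conductor $q \leq N$ --- so that Theorem \ref{thm:LMLow} applies and, by design, $\chi$ avoids the $O(N^{1/4})$ exceptions --- for which, on the relevant range of primes $p$, $\chi(p) \in \mu_g$ realizes $\max_{z \in \mu_g}\text{Re}(z\bar\psi(p))$, which coincides with $\max_{z \in \mu_g \cup \{0\}}\text{Re}(z\bar\psi(p))$ since $\psi(p) = 1 + o(1)$ forces this maximum to be positive; consequently $\mb{D}(\chi,\psi;Q)^2 = \log\log Q - \mc{S}(Q;\psi,g) + O(1)$.

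Finally one inserts this into \eqref{eq:lowBdMChi} and invokes Proposition \ref{prop:Spsig}. The point of arranging $\psi(p) = 1 + o(1)$ for $p \leq (\log m)/100$ is that then $\tilde\psi(p) = 1 + o(1)$ as well, so each weight $\cos(\tfrac{2\pi}{g}\|g^\ast\ell/k^\ast\|)$ appearing in Proposition \ref{prop:Spsig} equals $1 + o(1)$ on the support of its inner sum, whence the correction term attains, via \eqref{eq:pDivm}, its largest possible value $\log\log\log m - \log(m/\phi(m)) + O(1)$; and since $k^\ast$ is large the factor $(1-\delta_g)G(\pi/(gk^\ast))$, with $G(x) = x/\tan x$, is $1 - O((k^\ast)^{-2})$. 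Performing the optimization of the modulus exactly as in the proof of Theorem \ref{thm:oddOrdMag}, now in the direction of a lower bound, with $m \asymp \sqrt{\log\log\log q}$ prime and $k \geq (m-1)/2$ in place of $k = m-1$, then yields the bound of Theorem \ref{thm:sharp}. The main obstacle in this part is reconciling the arithmetic constraints on $m$ --- prime, $m-1$ almost free of small prime factors, Bujold's hypothesis and $\eta$-goodness in force --- needed to produce an \emph{odd} character $\psi$ of order $\gg m$, rather than the precisely maximal order $k = m-1$ on which the earlier construction relied.
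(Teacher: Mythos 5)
Your proposal takes essentially the same route as the paper: the paper offers no internal proof of Theorem \ref{thm:LMLow}, simply quoting it as a special case of \cite[Thm. 2.5]{LamMan}, which is precisely your opening step. Your further sketch of the mechanism inside \cite{LamMan} (P\'olya's expansion, the Euler-product/Mertens computation, the averaging that controls the exceptional set) and of the ensuing deduction of Theorem \ref{thm:sharp} goes beyond what the paper records for this statement, but the essential move---specializing the cited theorem to the given odd $\psi$ modulo $m \leq \log N$---is identical.
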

In view of Theorem \ref{thm:LMLow}, the proof of Theorem \ref{thm:sharp} will reduce to obtaining an asymptotic, up to an $O(1)$ error term, for $\mb{D}(\chi,\psi;Q)^2$, where $\chi$ is a suitable primitive character modulo $q$ of odd order $g$, and $\psi$ is a suitable odd primitive character modulo $m \leq \log Q$. To match the bound in Theorem \ref{thm:oddOrdMag}, this amounts to ensuring that, up to $O(1)$ error terms, the lower bounds and optimisation argument present in the proof of Theorem \ref{thm:oddOrdMag} can be turned into equalities. \\
Reviewing the proof of Theorem \ref{thm:oddOrdMag}, it is apparently sufficient for us to show is that there are $\eta,\delta \in (0,1/2)$ for which the following data is available: 
\begin{enumerate}
\item an infinite sequence of prime moduli $m_j$ that are $\eta$-good;
\item an \emph{odd} primitive character $\psi_j$ of order $\gg m_j$ for each $m_j$, such that for most $p \leq (\log m_j)/100$, if $\psi(p) = e(\ell/k)$ then $\cos(\tfrac{2\pi}{g}\|g^\ast \ell/k^\ast\|)$ is close to $1$;
\item a scale $N$ such that, $m_j \asymp \sqrt{\log\log N}$; and
\item $\gg N^{\delta}$ primitive characters $\chi_j$ modulo $q_j \leq N$ of order $g$ for which, setting $Q_j := \log q_j$ as before,
$$
\mb{D}(\chi_j,\psi_j;Q_j)^2 = \log\log Q_j - \mc{S}(Q_j;\psi_j,g) + O_{\eta}(1).
$$
\end{enumerate}
Items (1) and (2) will dictate how we select $\psi_j$, whereas items (3) and (4) identify how we will select $\chi_j$ given $\psi_j$. \\
The construction of $\chi_j$ given $\psi_j$ is already present in \cite[Sec. 4]{LamMan}. Proposition 2.6, proved there, shows that if $y \leq \log N/10$ and $m \leq (\log y)^{4/7}$ is \emph{any} non-exceptional modulus (in the sense of Siegel zeros) and $\psi$ is any primitive character modulo $m$ of some order $k$ then one may construct $\gg \sqrt{N}$ appropriate order $g$ characters $\chi$ of conductors $q \leq N$ such that 
$$
\mb{D}(\chi,\psi;y)^2 = \left(1-(1-\delta_g)\frac{\pi/(gk^\ast)}{\tan(\pi/(gk^\ast))}\right) \log\log y + O(\log\log m).
$$
In light of our refined Proposition \ref{prop:Spsig}, on preselecting $\psi$ judiciously we shall ensure that a more precise bound is available. We shall prove the following proposition to this end.
\begin{prop}\label{prop:consChiPsi}
Let $g \geq 3$ be a fixed odd integer. Let $M$ be large. Then there exist
\begin{itemize}
\item a prime $m \in (M/2,M]$ that is $\tfrac{1}{10}$-good and for which $(m-1,g) = 1$,
\item an odd primitive character $\psi \pmod{m}$ of order $k \geq (m-1)/2$,
\item a modulus $q$ such that $m \asymp \sqrt{\log\log \log q}$, and 
\item a primitive character $\chi$ modulo $q$ of order $g$
\end{itemize}
such that \eqref{eq:lowBdMChi} holds, and if $Q := \log q$ then
$$
\mb{D}(\chi,\psi; Q)^2 = \left(1-(1-\delta_g) \frac{\pi/(gk)}{\tan(\pi/(gk))}\right) (\log\log Q - \log\log \log m) + \log(m/\phi(m)) + O(1).  
$$
\end{prop}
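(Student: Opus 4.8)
The plan is to split the construction into two stages: first, to produce the prime $m$ together with the odd character $\psi$ of large order having $\psi(p)$ close to $1$ on small primes; and then, with $\psi$ fixed, to run the construction of \cite[Sec.~4]{LamMan}, combined with Proposition~\ref{prop:Spsig}, to produce $\chi$ and to verify the claimed formula for $\mb{D}(\chi,\psi;Q)^2$.

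For the first stage I would work with the primes $m\in(M/2,M]$ and impose three conditions on $m$. First, discard those that are not $\tfrac{1}{10}$-good: by a log-free zero-density estimate for Dirichlet $L$-functions the number of such $m$ is $o(\pi(M))$. Second, sieve so as to keep only those $m$ for which the odd part $n'$ of $m-1$ has no prime factor below a slowly growing threshold $z_0=z_0(M)$; by Mertens' theorem the number of surviving $m$ is $\gg\pi(M)/\log z_0$, and for $M$ large this simultaneously forces $(m-1,g)=1$ (so that $(k,g)=1$, $k^\ast=k$, $g^\ast=g$ and $\tilde{\psi}=\psi$ for every $k\mid m-1$) and $1-\phi(n')/n'=O\bigl(\tfrac{\log M}{z_0\log z_0}\bigr)$. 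Third, discard the $m$ in the (sparse) exceptional set of Bujold's theorem \cite{Buj}. For any $m$ surviving all three, \cite{Buj} furnishes many odd characters $\xi\pmod m$ that are close to $\xi_0$ on the primes $p\leq(\log m)/100$, in the quantitative sense that $\sum_{p\leq(\log m)/100}\tfrac{1-\text{Re}\,\xi(p)}{p}=O(1)$. A short count shows that the number of odd characters modulo $m$ of order $<(m-1)/2$ equals $\tfrac{m-1}{2}\bigl(1-\phi(n')/n'\bigr)$ --- an odd character modulo $m$ has order $\geq(m-1)/2$ unless it shares an odd prime factor with $m-1$ --- and, once $z_0$ is chosen appropriately, this is smaller than the number of characters supplied by \cite{Buj}. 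A pigeonhole argument then produces one such $\xi$, which I take to be $\psi$, of order $k\geq(m-1)/2$.

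With $\psi$ in hand I would first extract the consequence for $\mc{S}$. Writing $\psi(p)=e(\ell_p/k)$ and using $\|g\ell_p/k\|\leq g\|\ell_p/k\|$ together with the monotonicity of $\cos$ on $[0,\pi]$, one has $1-\cos\bigl(\tfrac{2\pi}{g}\|g\ell_p/k\|\bigr)\leq 1-\text{Re}\,\psi(p)$ for every $p\nmid m$; summing over $p\leq(\log m)/100$ and using the $O(1)$ bound above gives $\sum_{p\leq(\log m)/100}\tfrac{1}{p}\bigl(1-\cos(\tfrac{2\pi}{g}\|g\ell_p/k\|)\bigr)=O(1)$. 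Hence, using $(k,g)=1$ and \eqref{eq:pDivm}, the sum over $\ell$ in Proposition~\ref{prop:Spsig} equals $\sum_{p\leq(\log m)/100,\,p\nmid m}1/p+O(1)=\log\log\log m-\log(m/\phi(m))+O(1)$, so that, applying Proposition~\ref{prop:Spsig} with $\eta=\tfrac{1}{10}$,
$$
\mc{S}(y;\psi,g)=(1-\delta_g)\frac{\pi/(gk)}{\tan(\pi/(gk))}\bigl(\log\log y-\log\log\log m\bigr)+\log\log\log m-\log(m/\phi(m))+O(1).
$$
For the second stage I would fix the scale by taking $N$ with $\log\log\log N\asymp M^2$ and $y:=(\log N)/10$, so that $m\leq M<(\log y)^{4/7}$ and $m\leq\log N$; since $m$ is $\tfrac{1}{10}$-good it is in particular non-exceptional, so the construction of \cite[Sec.~4]{LamMan}, applied to the prime modulus $m$ and the character $\psi$ above, yields $\gg\sqrt{N}$ primitive characters $\chi$ of order $g$ and conductor $q\leq N$ with $\log\log\log q\asymp M^2$ (hence $m\asymp\sqrt{\log\log\log q}$), for which $\chi(p)$ maximises $\text{Re}(z\bar{\psi}(p))$ over $z\in\mu_g\cup\{0\}$ at every $p\leq Q$ outside a set contributing $O(1)$ to $\sum 1/p$ (here we use that $m$ is prime, so the prime $p=m$ contributes only $O(1/m)$). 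Consequently $\mb{D}(\chi,\psi;Q)^2=\log\log Q-\mc{S}(Q;\psi,g)+O(1)$, and inserting the displayed formula with $y=Q$ gives exactly the claimed identity for $\mb{D}(\chi,\psi;Q)^2$. Finally, Theorem~\ref{thm:LMLow} shows that \eqref{eq:lowBdMChi} holds for all but $O(N^{1/4})$ primitive characters of conductor $\leq N$; intersecting with the $\gg\sqrt{N}$ characters just constructed leaves $\gg\sqrt{N}$ admissible $\chi$, and I select any one of them.

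The step I expect to be the main obstacle is the first stage, and within it the simultaneous realisation of the three conditions on $m$: one must verify that the exceptional sets coming from the zero-density estimate and from \cite{Buj} are quantitatively thinner than the family of $\gg\pi(M)/\log z_0$ primes kept by the sieve, so that the intersection is non-empty, and then run the pigeonhole --- via the identity $\tfrac{m-1}{2}(1-\phi(n')/n')$ for the count of low-order odd characters --- to turn ``many near-principal odd characters'' into ``a near-principal odd character of order $\geq(m-1)/2$''. After $\psi$ is fixed, the remaining work is essentially bookkeeping resting on Proposition~\ref{prop:Spsig}, \cite[Sec.~4]{LamMan}, and Theorem~\ref{thm:LMLow}.
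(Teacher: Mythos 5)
Your proposal follows the same architecture as the paper's proof. Stage one is the paper's Proposition \ref{prop:consChar}: a sieve over shifted primes plus a zero-density estimate to obtain a $\tfrac{1}{10}$-good prime $m\in(M/2,M]$ with the odd part of $m-1$ free of small prime factors (whence $(m-1,g)=1$), then Bujold's Theorem \ref{thm:Buj} and a pigeonhole over character orders to extract an odd near-principal $\psi$ of order $k\geq(m-1)/2$. Stage two is exactly the paper's: Proposition \ref{prop:Spsig} together with \eqref{eq:pDivm} to evaluate $\mc{S}(y;\psi,g)$, the construction of \cite[Lem.~4.7]{LamMan} to produce $\gg\sqrt{N}$ order $g$ characters $\chi$ with $\mb{D}(\chi,\psi;Q)^2=\log\log Q-\mc{S}(Q;\psi,g)+O(1)$, and intersection with the $O(N^{1/4})$ exceptional set of Theorem \ref{thm:LMLow}. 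Two of your local variants are correct and in fact tidier than the paper's: the exact identity $\tfrac{m-1}{2}\bigl(1-\phi(n')/n'\bigr)$ for the number of \emph{odd} characters of order $<(m-1)/2$ (sharper than Lemma \ref{lem:largeOrd}), and the monotonicity inequality $1-\cos(\tfrac{2\pi}{g}\|g\ell_p/k\|)\leq 1-\text{Re}\,\psi(p)$ in place of the paper's pointwise Taylor expansion of the cosine.

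The one step that fails as written is the pigeonhole with a ``slowly growing'' threshold $z_0$. Bujold's theorem only \emph{guarantees} $\gg m\exp(-2T\log N/\log T)$ good odd characters; since you must take $T\asymp\log M$ (you need control at every $p\leq(\log m)/100$) and $N\to\infty$ (at least of size $\sqrt{\log\log\log M}$, so that $\sum_{p\leq(\log m)/100}(1-\text{Re}\,\psi(p))/p=O(1)$), this guaranteed count is at most $M\exp\bigl(-c\,\tfrac{\log M}{\log\log M}\log N\bigr)$, which decays faster than any power of $\log M$ relative to $M$. Your count of low-order odd characters is only bounded by $\ll M\log M/(z_0\log z_0)$, so for any $z_0$ of size $(\log M)^{O(1)}$ — or anything reasonably called slowly growing — the low-order characters may swamp the guaranteed near-principal ones and the pigeonhole does not close. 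You must take $z_0$ at least of size $\exp\bigl(C\,\tfrac{\log M\,\log\log\log\log M}{\log\log M}\bigr)$; the clean choice, and the one the paper makes in Lemma \ref{lem:siftShift}, is $z_0=M^{\delta}$ with a small fixed $\delta>0$, i.e. $P^-((m-1)/2)>M^{\delta}$. For such $z_0$ the lower bound $\gg\pi(M)\prod_{3\leq p<z_0}(1-\tfrac{1}{p-1})\gg M/(\log M)^2$ requires the fundamental lemma of the sieve together with Bombieri--Vinogradov (Mertens alone does not reach this range), and this count still comfortably exceeds the $O(M^{1/10})$ and $M^{1/5+o(1)}$ exceptional sets, while the low-order count $O_{\delta}(M^{1-\delta})$ is then negligible against Bujold's lower bound. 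With that quantitative correction — which you flagged as the main obstacle but did not resolve — your argument coincides with the paper's.
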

We first show that Proposition \ref{prop:consChiPsi} implies Theorem \ref{thm:sharp}.
\begin{proof}[Proof of Theorem \ref{thm:sharp} assuming Proposition \ref{prop:consChiPsi}]
For each $j \geq j_0$ set $M_j := 2^{j+1}$. By Proposition \ref{prop:consChiPsi} we may find a prime $m_j \in (M_j/2,M_j]$ with $(m_j-1,g) = 1$ that is a $\tfrac{1}{10}$-good modulus; an odd primitive character $\psi_j \pmod{m_j}$ with order $k_j \geq (m_j-1)/2$; a modulus $q_j$ with $m_j \asymp \sqrt{\log\log\log q_j}$; and a primitive character $\chi_j$ modulo $q_j$ of order $g$ such that \eqref{eq:lowBdMChi} holds, as does
$$
\mb{D}(\chi_j,\psi_j; Q_j)^2 = \left(1-(1-\delta_g) \frac{\pi/(gk_j)}{\tan(\pi/(gk_j))}\right) (\log\log Q_j - \log\log \log m_j) + \log(m_j/\phi(m_j)) + O(1).
$$
By Theorem \ref{thm:LMLow}, we have
$$
\log\left(\frac{M(\chi_j)}{\sqrt{q_j}}  + 1\right) \geq \log\log Q_j - \mb{D}(\chi_j,\psi_j;Q_j)^2 + \log(m_j/\phi(m_j)) - \frac{1}{2} \log m_j + O(1).
$$
Inserting our estimate for $\mb{D}(\chi_j,\psi_j;Q_j)^2$ and using the Taylor expansion $x/\tan x = 1-cx^2 + O(x^4)$ with $c > 0$ for $0 < x \leq \pi/6$ as before, the right-hand side of this last estimate is
$$
(1-\delta_g) \log\log Q_j + \delta_g \log\log\log m_j - \frac{1}{2}\log m_j - \frac{c\pi^2(1-\delta_g)}{gk_j^2} \log\log Q_j + O\left(1 + \frac{\log\log Q_j}{k_j^4} + \frac{\log\log\log m_j}{k_j^2}\right).
$$
As $(m_j-1)/2 \leq k_j \leq m_j-1$ and $m_j \asymp \sqrt{\log\log Q_j}$, we get that
$$
\log\left(\frac{M(\chi_j)}{\sqrt{q_j}}  + 1\right) \geq (1-\delta_g) \log\log Q_j - \frac{1}{4} \log\log \log Q_j + \delta_g\log\log\log\log\log Q_j + O(1).
$$
If $M_j$ (and thus $Q_j$) is sufficiently large then we deduce that $M(\chi_j)/\sqrt{q_j} \geq 10$, say, and the claim then follows upon exponentiating.
\end{proof}
\subsection{Constructing ``good'' moduli $m$ and odd characters $\psi$} \label{subsec:BujApp}
Our key ingredient in proving Proposition \ref{prop:consChiPsi} is the following. \begin{prop} \label{prop:consChar}
Let $g\geq 3$ be an odd integer and let $M \geq M_0$. Then there are: 
\begin{enumerate}[(i)]
\item a prime $m \in (M/2,M]$ that is $\tfrac{1}{10}$-good, and 
\item an odd primitive character $\psi \pmod{m}$ of order $k \geq (m-1)/2$ with $(k,g) = 1$, 
\end{enumerate}
such that\footnote{For $\alpha \in \mb{C} \bk \{0\}$ we $\text{arg}(\alpha) \in (-1/2,1/2]$ to be such that $\alpha = |\alpha|e(\text{arg}(\alpha)))$.}
$$
\max_{p \leq (\log m)/100} |\text{arg}(\psi(p))| \ll \frac{1}{\log \log M}.
$$
\end{prop}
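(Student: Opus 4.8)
The plan is to produce the prime $m$ and the character $\psi$ by combining a sieve for primes $m$ with good arithmetic structure with the result of Bujold \cite{Buj} on the existence of many characters that are ``close to principal'' on small primes. First I would isolate the three constraints we must satisfy simultaneously: (a) $m$ should be $\tfrac{1}{10}$-good, i.e.\ no $L(s,\xi)$ for $\xi \pmod m$ vanishes in a fixed zero-free rectangle; (b) $m-1$ should have no small prime factors other than $2$, so that the cyclic group $(\mb{Z}/m\mb{Z})^\times$ has no small-order quotient obstructing the existence of a high-order odd character; and (c) there should exist a character $\xi \pmod m$ with $\xi(-1) = -1$ and $|\text{arg}(\xi(p))| \ll (\log M)^{-1/2}$ for all $p \leq (\log m)/100$. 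The point is that each of these holds for a positive proportion (or all but a negligible proportion) of primes $m \in (M/2, M]$, so a counting argument gives an $m$ satisfying all three.

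For (a), the standard log-free zero-density estimate for Dirichlet $L$-functions (e.g.\ Linnik--Gallagher, or the density estimates used in \cite{LamMan}) shows that the number of moduli $m \leq M$ that fail to be $\tfrac{1}{10}$-good is $\ll M^{1-c}$ for some absolute $c>0$, hence negligible compared to $\pi(M) - \pi(M/2) \gg M/\log M$. For (b), I would use a sieve (Brun or Selberg): the number of primes $m \in (M/2,M]$ such that $m \equiv 3 \pmod 4$ and $p \nmid m-1$ for every odd prime $p \leq (\log M)/100$ is, by the fundamental lemma of sieve theory applied to the shifted primes $m-1$, of size $\gg (M/\log M)\prod_{3 \leq p \leq (\log M)/100}(1-\tfrac{1}{p-1}) \gg M/(\log M \cdot \log\log M)$ — still a positive proportion in a weak sense, and in particular $\gg M^{1-o(1)}$, which dominates the $O(M^{1-c})$ exceptional set from (a). For such an $m$, write $m - 1 = 2^a r$ with $r$ having all prime factors $> (\log M)/100$; since the character group is cyclic of order $m-1$, any character of order divisible by $r$ has order $\geq r \geq (m-1)/2^a$, and one checks $2^a$ is bounded by a power of $\log M$, which is negligible next to $m$ — but to get the clean bound $k \geq (m-1)/2$ I would instead be more careful and also require (again by sieving) that $m-1 = 2r$ with $r$ odd and free of prime factors $\leq (\log M)/100$; primes of this form are still $\gg M^{1-o(1)}$ in number, and for them every non-quadratic character has order equal to $m-1$ or $r = (m-1)/2$. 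Finally, among the $\phi(m-1)$ characters of order exactly $m-1$ (there are $\phi(m-1) \gg m/\log\log m$ of them, so certainly one of each parity exists once we note an odd character of order $m-1$ exists because $m \equiv 3 \pmod 4$ forces the quadratic character to be the unique even... actually the Legendre symbol mod $m\equiv 3\pmod 4$ is odd, so I must instead observe that since $m-1 = 2r$ with $r$ odd, exactly the characters of order dividing $r$ are even, and a character of order $m-1$ or order $r$ with the right sign exists), we get an odd character of order $\geq (m-1)/2$, and $(k,g)=1$ follows from $(m-1,g)=1$, which I fold into the sieve conditions on $m-1$.

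The genuinely new input, and the step I expect to be the main obstacle, is constraint (c): forcing the existence of an odd character $\psi \pmod m$ that is simultaneously of large order \emph{and} satisfies $|\text{arg}(\psi(p))| \ll (\log M)^{-1/2}$ for all $p \leq (\log m)/100$. Bujold's theorem \cite{Buj} supplies, for most prime moduli $m$, \emph{many} characters $\xi$ (of order $\asymp \log\log m$, or perhaps more) with $\xi(p) = 1 + o(1)$ uniformly for $p \ll \log m$; the difficulty is that the characters Bujold produces need not have large order, and need not be odd. My plan is to first apply Bujold's result to get a large \emph{set} $\mc{B}$ of such characters — large enough that $|\mc{B}|$ exceeds the number of characters of order $\leq (m-1)/2$ — so that some $\xi \in \mc{B}$ automatically has order $> (m-1)/2$, hence (given the structure $m-1 = 2r$) order exactly $m-1$ or $r$; then I would handle parity by noting that if the $\xi$ we found is even we can replace it by $\xi \cdot \chi_2$ where $\chi_2$ is the quadratic character, which is odd since $m \equiv 3 \pmod 4$, at the cost of possibly changing $\xi(p)$ on small primes only by the factor $\chi_2(p) = \pm 1$ — this threatens to ruin the argument bound — so instead the cleaner route is to incorporate the sign condition directly into the counting: among Bujold's characters, at least half lie in a fixed coset of the index-$2$ subgroup of even characters, i.e.\ are odd, provided one applies his construction to the subgroup generated appropriately, or one simply counts that the number of \emph{even} small-argument characters is itself $< \tfrac12|\mc{B}|$. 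Assembling these pieces — verifying that Bujold's count beats both the ``low order'' count and the ``wrong parity'' count, and that the resulting $\psi$ inherits $(k,g)=1$ and $\tfrac{1}{10}$-goodness of $m$ — yields the proposition. The main quantitative thing to check is that the number of characters Bujold's theorem furnishes, for a typical $m$ in our sieved set, is genuinely of size $\gg m^{1-o(1)}$ (or at least $\gg m / (\log m)^{O(1)}$ with room to spare over the quadratic-and-small-order exceptions); if his theorem only gives $\gg (\log m)^{O(1)}$ such characters this approach must be supplemented, and the fallback is to intersect Bujold's construction with the large-order condition more delicately using the explicit generators in his proof.
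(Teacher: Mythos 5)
Your overall architecture is the same as the paper's (sieve the shifted primes $m-1$, discard the $O(M^{1-c})$ moduli that are not $\tfrac{1}{10}$-good via zero-density estimates, invoke Bujold to get many ``almost principal'' characters, and pigeonhole against the characters of small order; one worry you can drop is parity, since Bujold's result as quoted in Theorem \ref{thm:Buj} already counts only characters with $\psi(-1)=-1$). However, there is a genuine gap, and it sits exactly at the quantitative comparison you flagged at the end: your sieving threshold is too small. You only remove odd prime factors of $m-1$ up to $(\log M)/100$. This does not force $(m-1)/2$ to be prime, so your claim that for such $m$ ``every non-quadratic character has order $m-1$ or $(m-1)/2$'' is false: if $(m-1)/2$ has a prime factor $p_1\asymp \log M$ (which your sieve permits), there are already $(m-1)/p_1 \asymp M/\log M$ characters of order at most $(m-1)/p_1 < (m-1)/2$. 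On the other hand, the number of odd small-argument characters \emph{guaranteed} by Theorem \ref{thm:Buj} with $T\asymp\log M$ and $N=\sqrt{\log M}$ is only $\gg m\exp(-2T\log N/\log T)$, which is at most $M\exp(-c\log M/\log\log M)$, i.e.\ smaller than $M/\log M$ by much more than any power of $\log M$. So with your sieve the low-order characters can outnumber Bujold's characters and the pigeonhole fails; insisting that $(m-1)/2$ be prime would repair it but is not attainable by sieving.

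The paper's resolution is to sieve to a \emph{power of $M$}: Lemma \ref{lem:siftShift} produces $\gg M/(\log M)^2$ primes $m\in(M/2,M]$ with $2\|(m-1)$ and $P^-((m-1)/2)>M^{\delta}$ for a fixed $\delta>0$, using the fundamental lemma of the sieve with level $M^{1/3}$, which now requires Bombieri--Vinogradov rather than just Siegel--Walfisz, with $\tfrac{1}{10}$-goodness imposed on the same set by zero-density estimates. For such $m$ the odd part of $m-1$ has $O_\delta(1)$ prime factors, all exceeding $M^{\delta}$, so the number of characters of order $<(m-1)/2$ is $O_\delta(M^{1-\delta})$ (Lemma \ref{lem:largeOrd}); this \emph{is} dominated by Bujold's count, and intersecting with the $O(M^{1/10})$ exceptional moduli of Theorem \ref{thm:Buj} yields the desired $m$ and $\psi$. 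As a bonus, $(k,g)=1$ comes for free, since every odd prime factor of $m-1$ exceeds $M^{\delta}>g$ and $g$ is odd. So your plan needs this one structural strengthening --- a rough-ness condition on $(m-1)/2$ at scale $M^{\delta}$ instead of $\log M$ --- before the counting step can close.
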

This result is a consequence of the following theorem of Bujold (see \cite[Prop. 4.5]{Buj}. 
\begin{thm}[Bujold] \label{thm:Buj}
Let $M$ be a large integer, let $T \leq \frac{\log M}{100}$ and $N \leq \frac{T}{2(\log T)^3}$. Then for all but $O(M^{1/10})$ primes $m \leq M$ we have
$$
\left|\left\{\psi \pmod{m} : \,  \psi(-1) = -1, \, \max_{p \leq T} |\psi(p)-1| \leq \frac{1}{N}\right\}\right| \gg m \exp\left(-\frac{2T \log N}{\log T}\right).
$$
\end{thm}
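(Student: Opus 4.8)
I would reduce the statement to a simultaneous Diophantine approximation problem attacked by a pigeonhole argument, after discarding a thin set of "bad" primes via the large sieve. \emph{Step 1 (parametrisation).} Fix a prime $m \leq M$ and a primitive root $g$ modulo $m$. The characters modulo $m$ are in bijection with $a \in \Z/(m-1)\Z$ via $\psi = \psi_a$, $\psi_a(g) = e(a/(m-1))$, and then $\psi_a(p) = e(a\,\mathrm{ind}_g(p)/(m-1))$ for each prime $p \nmid m$, where $\mathrm{ind}_g(p)$ is the discrete logarithm of $p$ to base $g$. Since $m-1$ is even, $\psi_a(-1) = e(a/2)$, so the oddness condition $\psi_a(-1) = -1$ is exactly the condition that $a$ is odd; and since $|e(\theta)-1| = 2|\sin(\pi\theta)|$, the condition $|\psi_a(p)-1| \leq 1/N$ is, up to an absolute constant, the condition $\|a\,\mathrm{ind}_g(p)/(m-1)\| \leq c/N$ for a fixed $c>0$. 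Thus it suffices to prove that for all but $O(M^{1/10})$ primes $m \leq M$,
\begin{equation}\label{eq:proposalgoal}
\#\Big\{a \in \Z/(m-1)\Z : a \text{ odd},\ \Big\|\tfrac{a\,\mathrm{ind}_g(p)}{m-1}\Big\| \leq \tfrac{c}{N}\ \text{ for all primes } p \leq T\Big\} \gg m\exp\!\Big(-\tfrac{2T\log N}{\log T}\Big).
\end{equation}

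\emph{Step 2 (the pigeonhole count).} Write $r := \pi(T)$, set $\alpha_p := \mathrm{ind}_g(p)/(m-1) \in \R/\Z$, and consider the $m-1$ points $P_a := (a\alpha_p)_{p \leq T} \in (\R/\Z)^r$. Each distinct value is attained exactly $d_m$ times, where $d_m := \gcd(m-1,\ \mathrm{ind}_g(p):p\leq T)$ is the index in $(\Z/m\Z)^\times$ of the subgroup generated by the primes $\leq T$. Partition $(\R/\Z)^r$ into $K^r$ subcubes of side $1/K$ with $K := \lceil (2\pi/c)N\rceil \asymp N$, so that lying in a common subcube forces $\|a\,\mathrm{ind}_g(p)/(m-1)\| \leq c/N$. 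By Dirichlet's box principle some subcube contains $\geq (m-1)/(d_mK^r)$ of the distinct $P_a$; their pairwise differences are distinct and all lie in the approximation set of \eqref{eq:proposalgoal} before the parity restriction. Applying the box principle separately to the even and odd residue classes of $a$ and subtracting an even-indexed point from an odd-indexed point then produces $\gg (m-1)/(d_mK^r)$ admissible $a$ that are moreover odd, unless some degenerate collapse of parities occurs. Since $K \asymp N$ and $r = \pi(T) = (1+o(1))T/\log T$, provided $d_m$ is at most a fixed power of $\log M$ one checks that $d_mK^{\pi(T)} \leq \exp(2T\log N/\log T + O(1))$, yielding \eqref{eq:proposalgoal}; here the hypotheses $T \leq (\log M)/100$ and $N \leq T/(2(\log T)^3)$ are precisely what keeps this arithmetic in balance.

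\emph{Step 3 (removing the bad primes).} It remains to bound the primes $m \leq M$ with $d_m$ too large. If $d_m \geq D$ then there is a character $\chi$ of prime order $\ell \geq $ (a prime factor of a divisor of $d_m$), hence with $\ell\mid m-1$, such that $\chi(p)=1$ for every $p\leq T$, i.e.\ every $p \leq T$ is an $\ell$-th power modulo $m$. Expanding the indicator of "$p$ is an $\ell$-th power" into characters of order dividing $\ell$ and summing over $m \leq M$, the diagonal term contributes $\asymp \pi(M)\ell^{-r}$ and the off-diagonal terms are controlled by the large sieve (equivalently Pólya--Vinogradov summed over $m$); summing over $\ell \geq D$ gives $\#\{m\leq M:\ d_m\geq D\}\ll \pi(M)D^{-r+1} + M^{1/2+o(1)}$. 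Taking $D$ a suitable fixed power of $\log M$ makes this $O(M^{1/10})$ and simultaneously renders the loss $d_m \leq D = (\log M)^{O(1)}$ harmless in Step 2. (A slight variant of the same large-sieve input also shows the collapse-of-parities situation occurs for a negligible set of $m$.) Discarding these primes completes the argument.

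\emph{Main obstacle.} The delicate point is Step 3: quantifying how rarely the primes up to $T$ fail to generate a large subgroup of $(\Z/m\Z)^\times$, uniformly enough to keep the exceptional set of size $O(M^{1/10})$ while ensuring the incurred factor $(\log M)^{O(1)}$ is swallowed by $\exp(2T\log N/\log T)$ — this is exactly where the ranges $T \leq (\log M)/100$ and $N \leq T/(2(\log T)^3)$ must be exploited carefully in the large-sieve bookkeeping. The parity refinement in Step 2 is a secondary technical nuisance. One could replace the pigeonhole count in Step 2 by a genuine equidistribution estimate for the points $P_a$ (again via the large sieve, now over $m$) to improve the constant, but this is not needed for the exponent as stated.
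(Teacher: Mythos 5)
This statement is not proved in the paper at all: it is quoted verbatim from Bujold \cite[Prop.~4.5]{Buj}, so your proposal can only be judged on its own merits, and as it stands it has two genuine gaps. The decisive one is the oddness condition $\psi(-1)=-1$, which you treat as a ``secondary technical nuisance''. Pigeonhole differences are parity-blind, and the difficulty is not merely combinatorial: for some primes $m$ there are \emph{no} odd characters at all with $\max_{p\le T}|\psi(p)-1|\le 1/N$. Indeed, if $-1\equiv \prod_{p\le T}p^{e_p}\pmod m$ with $\sum_p|e_p|$ small compared to $N$, then $(m-1)/2\equiv\sum_p e_p\,\mathrm{ind}_g(p)\pmod{m-1}$, so any $a$ with $\|a\,\mathrm{ind}_g(p)/(m-1)\|\ll 1/N$ for all $p\le T$ satisfies $\|a/2\|<1/2$ and is forced to be even. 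Hence the exceptional set of moduli is created by the parity condition itself, and the heart of any proof is to show that such ``short'' representations $-1\equiv \prod p^{e_p^+}/\prod p^{e_p^-}\pmod m$ occur for at most $O(M^{1/10})$ primes $m\le M$ (this is essentially a count of prime divisors of the integers $\prod p^{e_p^+}+\prod p^{e_p^-}$ over the admissible exponent vectors, and is exactly where the hypothesis $N\le T/(2(\log T)^3)$, together with $T\le(\log M)/100$, is spent). Your parenthetical claim that ``a slight variant of the same large-sieve input'' disposes of the ``collapse of parities'' does not engage with this obstruction.

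The second gap is that Step 3 does not work as described. First, $d_m\ge D$ only guarantees a character of \emph{some} prime order $\ell\mid d_m$, and $\ell$ may be $2$; the number of primes $m\le M$ for which every $p\le T$ is a quadratic residue is $\asymp \pi(M)2^{-\pi(T)}=M^{1-o(1)}$, far larger than $M^{1/10}$ (and such $m$ need not be bad at all), so the reduction to prime order cannot give your claimed bound and you must work with divisors $e>D$ of $d_m$, where $D$ must be taken to be roughly $(\log M)^{90}$ for $\pi(M)D^{-\pi(T)+1}\ll M^{1/10}$. Second, for such large $e$ the orthogonality expansion leaves off-diagonal terms of the form $\sum_{m\le M,\,m\equiv 1\,(e)}\chi_m(n)$ for \emph{individual} integers $n=\prod_{p\le T}p^{j_p}$ of size up to $\exp(eT)$; P\'{o}lya--Vinogradov in the modulus aspect is not available for characters of order $>2$ (there is no rational reciprocity to appeal to), and the large sieve requires an average over many $n$, so the claimed control of the off-diagonal is illusory, and the tools that do apply here (e.g.\ Gallagher's larger sieve applied to $T$-smooth numbers) only yield exceptional sets of size $M/(\log M)^{O(1)}$, not $M^{1/10}$. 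A redeeming observation: the $d_m$ loss in Step 2 is unnecessary (distinct $a$ lying in one box already give distinct admissible differences $a-a_0$), so the unrestricted count $\gg (m-1)/K^{\pi(T)}$ needs no control of $d_m$ and no exceptional moduli; had you noticed this, Step 3 could be discarded entirely, and the whole weight of the exceptional set would fall, correctly, on the parity obstruction described above.
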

To use this, we will construct primes $m$ for which Theorem \ref{thm:Buj} holds, and for which there are ``few'' characters $\xi$ of order $<(m-1)/2$. Naturally, as the order of a character modulo a prime $m$ is a divisor of $m-1$, it suffices to constrain the size of the odd prime factors of $m-1$, as we do below. In the sequel, for $n > 1$ we write $P^-(n)$ to denote the least prime factor of $n$.
\begin{lem} \label{lem:siftShift}
There is a $\delta \in (0,1/2)$ such that if $M$ is sufficiently large then there are $\gg\frac{M}{(\log M)^2}$ primes $m \in (M/2,M]$ such that $2||(m-1)$, $P^-((m-1)/2) > M^\delta$, and $m$ is a $\tfrac{1}{10}$-good modulus.
\end{lem}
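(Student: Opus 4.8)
The plan is to produce the desired primes by a one-dimensional (linear) sieve and then to discard the negligibly small set of moduli that fail to be $\tfrac1{10}$-good. First I would fix a small $\delta\in(0,1/4)$ (for concreteness $\delta=\tfrac1{10}$), set $z:=M^{\delta}$ and $P(z):=\prod_{2<p\le z}p$, and observe that the conditions to be arranged are purely multiplicative constraints on $m-1$: the condition $2\,\|\,(m-1)$ is equivalent to $m\equiv 3\pmod 4$, and, since this forces $(m-1)/2$ to be odd, the condition $P^-((m-1)/2)>M^{\delta}$ is equivalent to $\big((m-1)/2,\,P(z)\big)=1$. So I would apply a sieve to the sequence
\[
\mathcal A:=\{(m-1)/2:\ m\in(M/2,M]\ \text{prime},\ m\equiv 3\pmod 4\},
\]
sifting out the multiples of the primes $2<p\le z$. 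For odd squarefree $d\mid P(z)$, the condition $d\mid(m-1)/2$ together with $m\equiv 3\pmod 4$ pins $m$ down to a single residue class $a_d$ modulo $4d$ with $(a_d,4d)=1$, so by the prime number theorem in progressions the number of $m\in(M/2,M]$ prime with $d\mid(m-1)/2$ is, up to an error, $\tfrac1{\phi(4d)}(\pi(M)-\pi(M/2))=\tfrac1{2\phi(d)}(\pi(M)-\pi(M/2))$. Thus the sieve is of dimension $\kappa=1$, with local density $\omega(d)/d=1/\phi(d)$ and sifting function
\[
V(z)=\prod_{2<p\le z}\Big(1-\frac1{p-1}\Big)\asymp\frac1{\log z}\asymp\frac1{\delta\log M}.
\]

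Next I would run the linear lower-bound sieve (or the fundamental lemma of sieve theory) on $\mathcal A$ at level of distribution $D:=M^{1/2-\varepsilon}$. The resulting error term is $\sum_{d\le D}\mu^2(d)\,r_d$, where $r_d$ measures the discrepancy of the count of primes in $(M/2,M]$ lying in the progression $a_d\pmod{4d}$ (obtained as a difference of two counts up to $M$ and up to $M/2$); by the Bombieri--Vinogradov theorem this is $\ll_A M/(\log M)^A$ for every $A>0$, hence negligible. Since $\delta<1/4$ we have $s:=\log D/\log z=(1/2-\varepsilon)/\delta>2$, so the linear lower-bound sieve function is bounded below by a positive constant, and we obtain
\[
\#\big\{m\in(M/2,M]\ \text{prime}:\ 2\,\|\,(m-1),\ P^-((m-1)/2)>M^{\delta}\big\}\ \gg\ |\mathcal A|\,V(z)\ \gg\ \frac{M}{(\log M)^2}.
\]

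Finally I would remove the moduli that are not $\tfrac1{10}$-good. Write $\mathcal E(M)$ for the set of $m\le M$ such that some $L(s,\xi)$ with $\xi\ne\xi_0\pmod m$ vanishes in the region $1-\tfrac1{10}<\mathrm{Re}(s)\le 1$, $|\mathrm{Im}(s)|\le(\log m)^2$. Since the Euler factors relating a character to the primitive character inducing it are nonvanishing for $\mathrm{Re}(s)>0$, membership in $\mathcal E(M)$ forces some primitive $\xi$ of conductor $f\mid m$ to have such a zero; by standard (log-free) zero-density estimates for Dirichlet $L$-functions the number of such conductors $f\le M$ is $\ll M^{1-c}$ for an absolute $c>0$, and summing the trivial bound $\#\{m\le M:\ f\mid m\}\le M/f$ over them yields $|\mathcal E(M)|\ll M^{1-c'}=o\big(M/(\log M)^2\big)$ for some $c'>0$. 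Deleting $\mathcal E(M)$ from the count above therefore still leaves $\gg M/(\log M)^2$ primes $m\in(M/2,M]$ with $2\,\|\,(m-1)$, $P^-((m-1)/2)>M^{\delta}$, and $m$ a $\tfrac1{10}$-good modulus, which is the assertion.

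The main obstacle --- indeed the only non-routine ingredient --- is supplying an equidistribution statement for primes in arithmetic progressions to moduli as large as $M^{1/2-\varepsilon}$, which is precisely where Bombieri--Vinogradov enters, together with a zero-density estimate strong enough to make the non-good moduli genuinely negligible compared with $M/(\log M)^2$. Everything else is the textbook one-dimensional sieve, and $\delta$ only has to be chosen small enough that $\log D/\log z>2$ (or, if one prefers the fundamental lemma, small enough that $\log D/\log z$ is as large as one wishes, giving an asymptotic with an exponentially small relative error).
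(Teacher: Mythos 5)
Your sieve step is essentially the paper's own argument: the paper also sieves the shifted primes $m-1$ with $m\equiv 3\pmod 4$, controls the remainders by Bombieri--Vinogradov, and gets $\gg M/(\log M)^2$ primes $m\in(M/2,M]$ with $2\,\|\,(m-1)$ and $P^-((m-1)/2)>M^{\delta}$ (the paper runs the fundamental lemma at level $M^{1/3}$ with $z=M^{1/(3s)}$ and $\delta=1/(4s)$, you run the linear lower-bound sieve at level $M^{1/2-\varepsilon}$ with a fixed $\delta$; both are fine). The genuine problem is your final paragraph. A zero-density estimate bounds the \emph{number} of primitive conductors $f\le M$ whose $L$-function vanishes in the rectangle $\mathrm{Re}(s)\ge 9/10$, $|\mathrm{Im}(s)|\le(\log M)^2$, say by $M^{1-c}$, but it gives no lower bound on the \emph{size} of those conductors: since the height $(\log M)^2$ grows with $M$, no unconditional result prevents some fixed small conductor (even $f=3$) from being bad. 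Consequently the divisor sum $\sum_{f\ \mathrm{bad}} M/f$ is dominated by the smallest bad conductors and can be $\gg M$; even purely arithmetically, $M^{1-c}$ terms of the form $M/f$ can sum to $\gg M\log M$ if the $f$ are small. So the asserted bound $|\mathcal E(M)|\ll M^{1-c'}$ does not follow from what you wrote, and indeed no bound $o(M/(\log M)^2)$ for the full set $\mathcal E(M)$ of all non-good $m\le M$ is available by this route.

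Fortunately you only need to discard the non-good moduli among the \emph{primes} $m\in(M/2,M]$ produced by the sieve. For prime $m$, every non-principal character modulo $m$ is primitive of conductor exactly $m$, so a bad prime $m\in(M/2,M]$ is itself a bad conductor lying in $(M/2,M]$; the zero-density estimate (with $Q=M$, $T=(\log M)^2$, $\alpha=9/10$) then bounds the number of such $m$ by $M^{c}$ for some $c<1$, which is negligible against $M/(\log M)^2$. This is exactly how the paper proceeds: it applies the density estimate directly to moduli in $(M/2,M]$ rather than passing through all $m\le M$ and their divisors. With that one correction (replace your bound on $|\mathcal E(M)|$ by a bound on the bad \emph{prime} moduli in $(M/2,M]$), your argument is complete and coincides in substance with the paper's proof.
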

\begin{proof}
This is a straightforward consequence of the fundamental lemma of the sieve, the Bombieri-Vinogradov theorem and zero-density estimates for Dirichlet $L$-functions. \\ Precisely, let $s \geq 1$ be a parameter to be chosen later, and define
$$
\mc{A} := \{p-1 : p \in (M/2,M] \cap \mb{P}, p \equiv 3 \pmod{4}\}, \, D := M^{1/3}, \, z := D^{1/s}.
$$ 
Write $P(z) := \prod_{3 \leq p < z} p$, and let $(\lambda_d^{\pm})_{\ss{d|P(z) \\ d \leq D}}$ be the upper and lower bound sieve weights constructed in \cite[Lem. 6.3]{IK}, thus satisfying $|\lambda_d^\pm| \leq 1$ for all $d$. We may assume that $\lambda_d$ is supported on odd $d \leq D$, so we omit this redundant condition. \\ 
For each odd $d \leq D$ write
\begin{align*}
E((M/2,M];4d) &:= \max_{\ss{a \pmod{4d} \\ (a,4d) = 1}} \left|\sum_{\ss{M/2 < p \leq M \\ p \equiv a \pmod{4d}}} 1  - \frac{\pi(M)-\pi(M/2)}{\phi(4d)}\right|.
\end{align*}
Employing the upper and lower bound sieves, we get
\begin{align*}
|\{a \in \mc{A} : (a,P(z)) = 1\}| &\leq \frac{\pi(M)-\pi(M/2)}{2} \sum_{\ss{d|P(z)}} \frac{\lambda_d^+}{\phi(d)} + O\left(\sum_{d \leq D} E((M/2,M];4d)\right) \\ 
|\{a \in \mc{A} : (a,P(z)) = 1\}| &\geq \frac{\pi(M)-\pi(M/2)}{2}\sum_{\ss{d|P(z)}} \frac{\lambda_d^-}{\phi(d)} - O\left(\sum_{d \leq D} E((M/2,M];4d)\right). 
\end{align*}
By the Bombieri-Vinogradov theorem (see e.g. \cite[Thm. 17.1]{IK}), the error terms yield
$$
\sum_{d \leq D} E((M/2,M];4d) \ll \frac{M}{(\log M)^{100}},
$$
say. 
Furthermore, by the fundamental lemma of the sieve \cite[Lem. 6.3]{IK} and the prime number theorem, the main terms in each of the upper and lower bounds are
$$
\left(1+O\left(e^{-s} + \frac{1}{\log M}\right)\right)\frac{M}{4\log M} \prod_{3 \leq p < z} \left(1-\frac{1}{p-1}\right).
$$
Finally, by standard zero-density estimates (e.g., \cite[(10.7)]{IK}, taking $k = 1$ and $\alpha = 9/10$), the number of moduli $m \in (M/2,M]$ for which $\prod_{\xi \neq \xi_0 \pmod{m}} L(z,\xi)$ vanishes in the rectangle
$$
9/10 \leq \text{Re}(z) \leq 1, \quad |\text{Im}(z)| \leq (\log M)^2
$$ 
is $\ll M^{1/5+o(1)}$. \\
Selecting $s$ and $M$ to be sufficiently large (in an absolute sense), using Mertens' theorem we get that there is an absolute constant $C_1 > 0$ such that 
$$
|\{a \in \mc{A} : (a,P(z)) = 1, \, a+1 \text{ is  $\tfrac{1}{10}$-good}\}| \geq \left(C_1s + o(1)\right)\frac{M}{(\log M)^2}.
$$
By construction, the latter elements $a \in \mc{A}$ are of the form $a = m-1$, where $m$ is prime, $2||(m-1)$ and if $p|(m-1)$ with $p > 2$ then $p \geq z = M^{1/(3s)}$.
We select $\delta = 1/(4s) > 0$, so that $P^-((m-1)/2) > M^{\delta}$, and the claim follows. 
\end{proof}
\begin{lem}\label{lem:largeOrd}
Let $m \in (M/2,M]$ be a prime for which $2||(m-1)$ and $P^-((m-1)/2) > M^{\delta}$ for some $\delta > 0$. Then the number of characters $\psi \pmod{m}$ of order $< (m-1)/2$ is $O_{\delta}(M^{1-\delta})$. 
\end{lem}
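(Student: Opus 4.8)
The plan is to exploit the cyclic structure of the character group modulo the prime $m$. Since $m$ is prime, the group of Dirichlet characters modulo $m$ is cyclic of order $m-1$, so for each divisor $d \mid m-1$ there are exactly $\phi(d)$ characters of order $d$, and hence the number of characters of order $<(m-1)/2$ equals $\sum_{\substack{d \mid m-1 \\ d < (m-1)/2}} \phi(d)$. The first step is the observation that a divisor $d$ of $n := m-1$ with $d \geq n/2$ forces $n/d \in \{1,2\}$, so the only divisors of $m-1$ that are at least $(m-1)/2$ are $m-1$ itself and $(m-1)/2$. Writing $m-1 = 2r$ with $r := (m-1)/2$ odd (this is where $2\|(m-1)$ enters), the identity $\sum_{d \mid n}\phi(d) = n$ then yields
\[
\sum_{\substack{d \mid m-1 \\ d < (m-1)/2}} \phi(d) = (m-1) - \phi(m-1) - \phi\!\left(\tfrac{m-1}{2}\right) = 2r - 2\phi(r),
\]
using $\phi(2r) = \phi(r)$ since $r$ is odd.

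It then remains to bound $2r - 2\phi(r) = 2r\bigl(1 - \phi(r)/r\bigr)$, and here I would invoke the elementary inequality $\prod_{p \mid r}(1-1/p) \geq 1 - \sum_{p\mid r} 1/p$, giving $1 - \phi(r)/r \leq \sum_{p \mid r} 1/p$. Every prime dividing $r$ exceeds $P^-(r) > M^{\delta}$, and the automatic bound $M^{\delta\,\omega(r)} < P^-(r)^{\omega(r)} \leq r \leq M$ gives $\omega(r) < 1/\delta$. Hence $\sum_{p\mid r} 1/p < \omega(r) M^{-\delta} < \delta^{-1} M^{-\delta}$, and since $r < M/2$ this shows the number of characters of order $<(m-1)/2$ is $< \delta^{-1} M^{1-\delta} = O_{\delta}(M^{1-\delta})$, as claimed.

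Every step is elementary, so there is no serious obstacle; the one point doing the real work is the reduction of the count to $2r - 2\phi(r)$ via the remark about divisors of $m-1$ near $(m-1)/2$, after which the hypothesis $P^-((m-1)/2) > M^{\delta}$, together with the resulting bound $\omega((m-1)/2) \ll_{\delta} 1$, immediately produces the saving of a power $M^{\delta}$.
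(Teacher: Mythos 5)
Your proof is correct and rests on the same mechanism as the paper's: the cyclicity of the character group modulo the prime $m$ reduces the count to the large odd prime factors of $m-1$, of which there are $\ll_{\delta} 1$, each exceeding $M^{\delta}$. The only cosmetic difference is bookkeeping — you evaluate the count exactly as $(m-1)-\phi(m-1)-\phi\bigl(\tfrac{m-1}{2}\bigr)=2r-2\phi(r)$ and bound $1-\phi(r)/r\leq\sum_{p\mid r}1/p$, whereas the paper writes $\psi=\xi^{d}$ and applies the same first-moment/union bound directly to the indices $d$ divisible by some $p_r$.
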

\begin{proof}
Let $\xi$ be a generator for the group of Dirichlet characters modulo $m$. Then every  $\psi \pmod{m}$ is of the form $\psi = \xi^d$, for some $1 \leq d \leq m-1$, and the order of $\psi$ is $\text{ord}(\psi) = (m-1)/(d,m-1)$. By assumption, we may write $m-1 = 2p_1\cdots p_R$, where $R \ll \delta^{-1}$ and $M^{\delta} < p_1 \leq \cdots \leq p_R \leq m/2$ for all $1 \leq r \leq R$. Thus, $\text{ord}(\psi) < (m-1)/2$ if, and only if, $\psi = \xi^d$ such that there is $1 \leq r \leq R$ with $p_r|d$. But then
$$
|\{\psi \pmod{m} : \, \text{ord}(\psi) < (m-1)/2\}| \leq \sum_{1 \leq r \leq R} \sum_{\ss{1 \leq d \leq m-1 \\ p_r|d}} 1 \ll m \sum_{1 \leq r \leq R} \frac{1}{p_r} \leq \frac{RM}{p_1} \ll_{\delta} M^{1-\delta},
$$
as claimed.
\end{proof}
\begin{proof}[Proof of Proposition \ref{prop:consChar}]
Let $M$ be large. By Lemma \ref{lem:siftShift} there is a $\delta \in (0,1/2)$ and $\gg M/(\log M)^2$ primes $m \in (M/2,M]$ such that $m$ is $\tfrac{1}{10}$-good, $2||(m-1)$, and if $p|(m-1)$ with $p > 2$ then $p > M^{\delta}$. According to Lemma \ref{lem:largeOrd}, for each of these $m$ at most $O_{\delta}(M^{1-\delta})$ of the characters $\psi \pmod{m}$ have order $< (m-1)/2$. \\
If we now apply Theorem \ref{thm:Buj} (taking $N = \log \log M$ and $T = (\log M)/100$), we see that for all but $O(M^{1/10})$ primes $m' \in (M/2,M]$ there are 
$$
\gg M\exp\left(-\frac{2T(\log N)}{\log T}\right) \gg M \exp\left(-\frac{(\log M)(\log\log \log M)}{50\log\log M}\right)
$$
odd characters $\psi \pmod{m'}$ for which we have
\begin{equation} \label{eq:closeTo1}
|\psi(p)-1| \leq \frac{1}{\log\log M} \text{ whenever } p \leq (\log M)/100.
\end{equation}
Combining these two results, we see that for large enough $M$ we may select $m$ in Theorem \ref{thm:Buj} such that $m$ is a $\tfrac{1}{10}$-good modulus and there is an odd primitive character $\psi$ modulo $m$ satisfying \eqref{eq:closeTo1} with $k = \text{ord}(\psi) \geq (m-1)/2$. As $g$ is a fixed integer, if $M$ is large enough then $(g,m-1) =1$, and since $k|(m-1)$, $(k,g) = 1$ as well. Moreover, we clearly have $|\text{arg}(\psi(p))| \asymp |\psi(p)-1|$. The claim now follows.
\end{proof}
\begin{proof}[Proof of Proposition \ref{prop:consChiPsi}]
We first show that for large $M$ there is a character $\psi$ that satisfies the first two bullet points of the proposition, and such that whenever $y \geq e^{7m/4}$,
\begin{equation}\label{eq:Goal1}
\log\log y - \mc{S}(y;\psi,g) = \left(1-(1-\delta_g)\frac{\pi/(gk)}{\tan(\pi/(gk))}\right)\left(\log\log y - \log\log\log m\right) + \log(m/\phi(m)) + O(1).
\end{equation}
By Proposition \ref{prop:consChar}, there is a $\tfrac{1}{10}$-good modulus $m \in (M/2,M]$ and an odd primitive character $\psi \pmod{m}$ of order $k \geq (m-1)/2$, $(k,g) = 1$, such that $|\text{arg}(\psi(p))| \ll 1/(\log\log M)$ for all $p \leq (\log m)/100$. We therefore see that if $p \leq (\log m)/100$ then
$$
\psi(p) = e(\ell/k) \Rightarrow \|g\ell/k\| \leq g\|\ell/k\| \ll \frac{g}{\log \log M}.
$$
It follows by Taylor expansion that $\cos(\tfrac{2\pi}{g} \|g\ell/k\|) = 1 + O\left(\frac{1}{(\log \log M)^{2}}\right)$, and on combining this with \eqref{eq:pDivm}, 
\begin{align*}
\sum_{\ell \pmod{k}} \cos(\tfrac{2\pi}{g}\|g \ell/k\|)\left(\sum_{\ss{p \leq (\log m)/100 \\ \tilde{\psi}(p) = e(\ell/k)}} \frac{1}{p}\right)  
&= \sum_{\ell \pmod{k}} \sum_{\ss{p \leq (\log m)/100 \\ \tilde{\psi}(p) = e(\ell/k)}} \frac{1}{p} + O\left(\frac{1}{\log \log M}\right) \\
&= \log\log\log m - \log(m/\phi(m)) + O(1).
\end{align*}
Applying Proposition \ref{prop:Spsig} for this $\psi$ and $y \geq e^{7m/4}$, we obtain \eqref{eq:Goal1}. \\
Next, we show that, given this $\psi$ we may choose $q$ with $m \asymp \sqrt{\log\log\log q}$ and a primitive character $\chi \pmod{q}$ of order $g$ such that \eqref{eq:lowBdMChi} holds, and 
$$
\mb{D}(\chi,\psi;\log q)^2 = \log\log\log q - \mc{S}(\log q; \psi,g) + O(1).
$$
Let $N$ be such that $m = \llf \sqrt{\log\log \log N} \rrf$. By \cite[Lem. 4.7]{LamMan} there are $\gg \sqrt{N}$ moduli $N^{1/3} < q \leq N$ and primitive characters $\chi$ modulo $q$ of order $g$ such that for every $p \leq (\log N)/100$, $p \nmid gm$, if $\psi(p) = e(\ell/k)$ then $\chi(p) = z_\ell$, with $z_\ell$ defined as in \eqref{eq:zEllDef}. Since $\log \log N = \log \log q + O(1)$, we easily obtain $m \asymp \sqrt{\log\log\log q}$, and also
\begin{align}
\mb{D}(\chi,\psi;\log q)^2 &= \log\log\log q - \sum_{\ell \pmod{k}} \text{Re}(z_\ell e(-\ell/k)) \sum_{\ss{p \leq (\log N)/100 \\ \psi(p) = e(\ell/k)}} \frac{1}{p} + O(1) \nonumber \\
&= \log\log\log q - \mc{S}(\log q; \psi,g) + O(1). \label{eq:Goal2}
\end{align}
As \eqref{eq:lowBdMChi} holds for all but $O(N^{1/4})$ characters $\chi$, some such $q$ and $\chi$ must satisfy both \eqref{eq:lowBdMChi} and \eqref{eq:Goal2} simultaneously. Combining this with \eqref{eq:Goal1} when $y = \log q$ (noting that indeed, $y\geq e^{7m/4}$), the proposition follows.
\end{proof}

\section*{Acknowledgments}
\noindent We thank the Lit and Phil library in Newcastle-upon-Tyne for excellent working conditions during the writing of this paper.

\bibliographystyle{plain}
\bibliography{sharpOddV3.bib}
\end{document}